\colorlet{shadecolor}{blue!15}
\newtheorem{theorem}{Theorem}[section]
\newtheorem{lemma}[theorem]{Lemma}
\newtheorem{remark}[theorem]{Remark}
\newenvironment{proof}[1][\relax]%s
  {\paragraph{Proof\ifx#1\relax\else~of #1\fi}}%
  {~\hfill$\square$\par\bigskip}
\newcommand{\bbE}{\mathbb{E}}
\newcommand{\bbG}{\mathbb{G}}
\newcommand{\bbH}{\mathbb{H}}
\newcommand{\bbR}{\mathbb{R}}
\newcommand{\bbV}{\mathbb{V}}
\newcommand{\bbZ}{\mathbb{Z}}
\newcommand{\U}{{\mathbf U}}
\newcommand{\V}{{\mathbf V}}
\newcommand{\e}{{\bf e}}
\renewcommand{\U}{{\bf U}}
\newcommand{\X}{{\bf X}}
\newcommand{\Y}{{\bf Y}}
\title{Sharp phase transition for the random-cluster and Potts models via decision trees}
\author{Hugo Duminil-Copin\thanks{Universit\'e de Gen\`eve} \thanks{Institut des Hautes \'Etudes Scientifiques} , Aran Raoufi\addtocounter{footnote}{-1}\footnotemark\ , Vincent Tassion\thanks{ETH Zurich}}
\date{\today}
\date{\today} 
\begin{document}
% ***************CORPS DU DOCUMENT***********
\maketitle

\begin{abstract}
We prove an inequality on decision trees on monotonic measures which generalizes the OSSS inequality on product spaces.
As an application, we use this inequality to prove a number of new results on lattice spin models and their random-cluster representations. More precisely, we prove that \begin{itemize}
  \item For the Potts model on transitive graphs, correlations decay exponentially fast for $\beta<\beta_c$.
  \item For the random-cluster model with cluster weight $q\ge1$ on transitive graphs, correlations decay exponentially fast in the subcritical regime and the cluster-density satisfies the mean-field lower bound in the supercritical regime.  
 \item For the random-cluster models with cluster weight $q\ge1$ on planar quasi-transitive graphs $\bbG$,
 $$\frac{p_c(\bbG)p_c(\bbG^*)}{(1-p_c(\bbG))(1-p_c(\bbG^*))}~=~q.$$
As a special case, we obtain the value of the critical point for the square, triangular and hexagonal lattices (this provides a short proof of the result of \cite{beffara2012self}).
\end{itemize}
These results have many applications for the understanding of the subcritical (respectively disordered) phase of all these models. The techniques developed in this paper have potential to be extended to a wide class of models including the Ashkin-Teller model, continuum percolation models such as Voronoi percolation and Boolean percolation, super-level sets of massive Gaussian Free Field, and random-cluster and Potts model with infinite range interactions.
\end{abstract}

\section{Introduction}

\subsection{OSSS inequality for monotonic measures}

In theoretical computer science, determining the computational complexity of tasks is a very difficult problem (think of P against NP). To start with a more tractable problem, computer scientists have studied {\em decision trees}, which are simpler models of computation. A decision tree associated to a Boolean function $f$ takes $\omega\in\{0,1\}^n$ as an input, and reveals algorithmically the value of $\omega$ in different bits one by one. The algorithm stops as soon as the value of $f$ is the same no matter the values of $\omega$ on the remaining coordinates. The question is then to determine how many bits of information must be revealed before the algorithm stops. The decision tree can also be taken at random to model random or quantum computers.

The theory of (random) decision trees played a key role in computer science  (we refer the reader to the survey \cite{buhrman2002complexity}), but also found many applications in other fields of mathematics. In particular, random decision trees (sometimes called randomized algorithms) were used in \cite{schrammsteif} to study the noise sensitivity of Boolean functions, for instance in the context of percolation theory. 

The OSSS inequality, originally introduced in \cite{OSSS} for product measure as a step toward a conjecture of Yao \cite{yao1977probabilistic}, relates the variance of a Boolean function to the influence of the variables and the computational complexity of a random decision tree for this function. The first part of this paper consists in generalizing the OSSS inequality to the context of monotonic measures which are not product measures.  
A {\em monotonic} measure is a measure $\mu$ on $\{0,1\}^E$ such that for any $e\in E$, 
any $F \subset E$, and any $\xi,\zeta\in \{0,1\}^{F}$ satisfying $\xi \le  \zeta $, $\mu[\omega_e=\xi_e,\forall e\in F]>0$ and  $\mu[\omega_e=\zeta_e,\forall e\in F]>0$,   
$$\mu[\omega_e=1 \: |\: \omega_e=\xi_e,\forall e\in F]\le \mu[\omega_e=1 \: |\: \omega_e=\zeta_e,\forall e\in F].$$

The motivation to choose such a class of measures comes from the applications to mathematical physics (for example, any positive measure satisfying the FKG-lattice inequality is monotonic, see \cite{Gri06} for more details), but monotonic measures also appear in computer science.

In order to state our theorem, we introduce a few notation. Consider a finite set $E$ of cardinality $n$. For a $n$-tuple $e=(e_1,\dots,e_n)$ and $t\le n$, write $e_{[t]}=(e_1,\dots,e_t)$ and $\omega_{e_{[t]}}=(\omega_{e_1},\dots,\omega_{e_t})$.

A decision tree encodes an algorithm that takes $\omega \in \{0,1\}^E$ as an input, and then queries the values of $\omega_e$, $e\in E$ one bits 
after the other. For any input $\omega$, the algorithm always starts from the same fixed $e_1\in E$ (which corresponds to the root of the decision 
tree), and queries the value of $\omega_{e_1}$. Then, the second element $e_2$ examined by the algorithm is prescribed by the decision tree and  
may depend on the value of  $\omega_{e_1}$. After having queried the value of $\omega_{e_2}$, the algorithm continues inductively. At step $t>1$,  
$(e_1,\ldots,e_{t-1})\in E^{t-1}$ has been examined, and the values of $\omega_{e_1},\ldots,\omega_{e_{t-1}}$ have been queried. The next element 
$e_t$ to be examined by the algorithm is a deterministic function of what has been explored in the previous steps:
\begin{equation}
e_t=\phi_t\big((e_1,\ldots,e_{t-1}), \omega_{(e_1,\ldots,e_{t-1})})\in E\setminus \{e_1,\dots,e_{t-1}\}.\label{eq:1}
\end{equation}

($\phi_t$ should be interpreted as the decision rule at time $t$: $\phi_t$ takes the location and the value of the first $t-1$ steps of the induction, and decides of the next bit to query).
Formally, we call  {\em decision tree} a pair $T=(e_1, (\phi_t)_{2 \leq  t \leq n})$, where $e_1 \in E$, and for each $t$ the function $\phi_t$, as above, takes a pair  $((e_1,\ldots,e_{t-1}), \omega_{(e_1,\ldots,e_{t-1})})$ as an input and return an element $e_t\in E\setminus\{e_1,\dots,e_{t-1}\}$.     
%Let $T=(e_1, (\phi_t)_{2 \leq  t \leq n})$ be a decision tree and $f:\{0,1\}^E\rightarrow \bbR$. Given $\omega\in\{0,1\}^E$, the decision tree $T$  gives back an ordered sequence $e=(e_1,\dots,e_n)$ defined inductively by \eqref{eq:1}
%(this corresponds to the ordering on $E$ that we get when we run the algorithm $T$ starting from the input $\omega$). 

Let $T=(e_1, (\phi_t)_{2 \leq  t \leq n})$ be a decision tree and $f:\{0,1\}^E\rightarrow \bbR$. Given $\omega\in\{0,1\}^E$ we consider the  $n$-tuple $(e_1,\ldots,e_n)$ defined inductively by \eqref{eq:1} (this corresponds to the ordering on $E$ that we get when we run the algorithm $T$ starting from the input $\omega$). We define
\begin{equation}
\label{eq:ddd}\tau(\omega)=\tau_{f,T}(\omega):=\min\big\{ t\ge1:\forall \omega'\in\{0,1\}^E,\quad\omega'_{e_{[t]}}=\omega_{e_{[t]}}\Longrightarrow f(\omega)=f(\omega') \big\}.
\end{equation}
In computer science, a decision tree is usually associated directly to a Boolean function $f$ and defined as a rooted directed tree in which internal nodes are labeled by elements of $E$, leaves by possible outputs, and edges are in correspondence with the possible values of the bits at vertices (see \cite{OSSS} for a formal definition). In particular, the decision trees are usually defined up to $\tau$, and not later on. In this paper, we chose the slightly different formalism described above, which is equivalent to the classical one, since it will be more convenient for the proof of the following theorem. 
% The notion of Markovian exploration and stopping time defined above is connected with the standard notions of algorithm and decision tree in the theory of Boolean function. 
%Indeed, a decision tree associated to a Boolean function $f$ can be used to explore partially (up to time $\tau$) the random configuration $\omega$. Then, the remaining edges can be explored in any order. After these two stages of exploration, all the configuration $\omega$ is explored and the set of edges $\mathbf e=(\mathbf e_1,\mathbf e_2,\ldots,\mathbf e_n)$ ordered with respect to the exploration defines a Markovian exploration. When $\tau$ is taken to be the time at which $f$ is determined, the $\tau$-revealment simply corresponds to the standard notion of revealment associated to a random decision tree. 

\begin{theorem}
\label{thm:OSSS}
Fix an increasing function $f:\{0,1\}^E\longrightarrow [0,1]$ on a finite set $E$. For any monotonic measure $\mu$ and any decision tree $T$,  \begin{equation}
    \label{eq:OSSS}
 \mathrm{Var}_\mu(f)~\le~    \sum_{e\in E}  \delta_e(f,T) \, \mathrm{Cov}_\mu (f,\omega_e) ,
  \end{equation}
  where 
$
\delta_e(f,T):=\mu\big[\exists \, t \le \tau(\omega)\::\:e_t=e\big]
$
is the revealment (of $f$) for the decision tree $T$.

\end{theorem}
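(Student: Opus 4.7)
The plan is to adapt the coupling proof of the classical OSSS inequality on product spaces, using the monotonic structure of $\mu$ in place of the product structure.

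I would introduce two independent $\mu$-samples $\omega,\omega^*$ and, running the algorithm on $\omega$, build an interpolating family of configurations $(\omega^{(t)})_{0\le t\le n}$ coupled to $(\omega,\omega^*)$ in such a way that every $\omega^{(t)}$ has law $\mu$, agrees with $\omega$ on $\{e_1(\omega),\ldots,e_t(\omega)\}$, satisfies $\omega^{(0)}\stackrel{d}{=}\omega^*$ independent of $\omega$, and $\omega^{(n)}=\omega$. The construction is by sequential conditional sampling: given the values of $\omega$ on the first $t$ queried coordinates, the remaining entries of $\omega^{(t)}$ are drawn from the $\mu$-conditional given these values, coupled across $t$ using the monotonic structure so that the transitions $\omega^{(t-1)}\to\omega^{(t)}$ are as ``small'' as possible. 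The key property inherited from~\eqref{eq:ddd} is that for every $t\ge\tau(\omega)$ the configuration $\omega^{(t)}$ agrees with $\omega$ on $\{e_1,\ldots,e_\tau\}$, and hence $f(\omega^{(t)})=f(\omega)$.

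Starting from the identity $2\,\mathrm{Var}_\mu(f)=\mathbb{E}[(f(\omega)-f(\omega^*))^2]$ together with the telescope
$$f(\omega)-f(\omega^*)\;=\;\sum_{t=1}^n \bigl(f(\omega^{(t)})-f(\omega^{(t-1)})\bigr),$$
in which only the terms $t\le\tau$ contribute, I would expand and regroup by the (random) coordinate $e_t$ first queried at time $T_e$:
$$2\,\mathrm{Var}_\mu(f)\;=\;\sum_{e\in E}\mathbb{E}\bigl[\mathbf{1}_{T_e\le\tau}\bigl(f(\omega)-f(\omega^*)\bigr)\bigl(f(\omega^{(T_e)})-f(\omega^{(T_e-1)})\bigr)\bigr].$$
The main claim is then that each term is at most $2\,\delta_e(f,T)\,\mathrm{Cov}_\mu(f,\omega_e)$. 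The key observation is that, conditionally on the history $\mathcal{F}_{T_e-1}$ of observed values and on $\{T_e\le\tau\}$, the configurations $\omega^{(T_e-1)}$ and $\omega^{(T_e)}$ can be viewed as two $\mu$-conditional samples that agree except through the constraint $\omega^{(T_e)}_e=\omega_e$ (whereas in $\omega^{(T_e-1)}$ the coordinate at $e$ is drawn independently from its $\mu$-conditional marginal). Expanding the expectation under this coupling converts the term into a conditional covariance of $f$ and $\omega_e$ under $\mu$; comparing this to the unconditional covariance uses the FKG-type positive association of the two increasing functionals $\mathbb{E}_\mu[f\mid\omega_F]$ and $\mathbb{E}_\mu[\omega_e\mid\omega_F]$ inherited from the monotonicity hypothesis (both are increasing in $\omega_F$, by the monotonicity of $\mu$ and of $f$), together with a careful accounting of the random subset $F$ determined by the algorithm.

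The main obstacle is the construction of the coupling $(\omega^{(t)})$ for non-product monotonic measures and the control of the ``extra'' differences between $\omega^{(t-1)}$ and $\omega^{(t)}$ at coordinates other than $e_t$: unlike the product case, where the two configurations differ only at $e_t$, here the conditional sampling may force them to differ at other coordinates, and the monotonicity of $\mu$ is essential both to build a coupling that keeps this effect small and to bound the resulting contribution in terms of the product $\delta_e(f,T)\,\mathrm{Cov}_\mu(f,\omega_e)$.
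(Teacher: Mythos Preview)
Your high-level plan --- a Lindeberg-type interpolation between two $\mu$-samples, telescoping along the algorithm's queries, and using monotonicity to bound each increment by a covariance --- is exactly the paper's strategy. But the proposal is missing the one device that makes this work for non-product monotonic measures: an \emph{explicit} encoding of $\mu$-samples as deterministic increasing functions of i.i.d.\ uniforms, read along the algorithm's ordering. Concretely, one defines $F_e(u)\in\{0,1\}^E$ for $u\in[0,1]^n$ and $e\in\vec E$ by $x_{e_t}=\mathbf 1\{u_t\ge \mu[\omega_{e_t}=0\mid \omega_{e_{[t-1]}}=x_{e_{[t-1]}}]\}$, and builds the interpolants as $\omega^{(t)}=F_{\e}(\mathbf W^t)$ where $\mathbf W^{t-1}$ and $\mathbf W^t$ differ in a single coordinate. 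This does two things your sketch leaves open. First, it gives for free the structural fact you flag as the ``main obstacle'': because $F_\e$ is sequential and deterministic, $\omega^{(t)}_{e_t}=\omega^{(t-1)}_{e_t}$ forces $\omega^{(t)}=\omega^{(t-1)}$, so the \emph{only} way the two interpolants differ is through the value at $e_t$. That is what lets one write $|f(\omega^{(t)})-f(\omega^{(t-1)})|=(f(\omega^{(t)})-f(\omega^{(t-1)}))(\omega^{(t)}_{e_t}-\omega^{(t-1)}_{e_t})$ and land on a covariance. Second, monotonicity of $\mu$ translates into $F_\e$ being coordinatewise increasing in $u$, so the FKG step is carried out on the product law of the i.i.d.\ uniforms (Harris), not on $\mu$ itself; your proposed route via positive association of $\mathbb E_\mu[f\mid\omega_F]$ and $\mathbb E_\mu[\omega_e\mid\omega_F]$ over a \emph{random} $F$ determined by the algorithm is much harder to control.

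A secondary issue: starting from $2\,\mathrm{Var}_\mu(f)=\mathbb E[(f(\omega)-f(\omega^*))^2]$ and telescoping one factor leaves you with $(f(\omega)-f(\omega^*))$ multiplying each increment, and this factor is not $\mathcal F_{T_e-1}$-measurable, so it cannot be pulled out of the conditional expectation as your ``conditional covariance'' step would require. You would have to bound $|f(\omega)-f(\omega^*)|\le 1$ anyway, which reduces to the $L^1$ bound $\mathrm{Var}_\mu(f)\le \tfrac12\,\mathbb E|f(\omega)-f(\omega^*)|$ that the paper uses directly (this is where the hypothesis $f\in[0,1]$ enters). In short: the outline is right, but the proof does not go through until you replace ``sequential conditional sampling \ldots\ using the monotonic structure'' by the concrete uniform-encoding construction above; once you do, the two remaining steps (the increment identity and the FKG bound) become precise and the argument matches the paper's.
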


A slightly stronger form of this result is stated in Section 2. In this paper, we focus on applications of the previous result to statistical physics but we expect it to have a number of applications in the context of the theory of Boolean functions. The interested reader is encouraged to consult \cite{Odonnell14} for a detailed introduction to the subject.   Theorems regarding Boolean functions have already found several applications in statistical physics, especially in the context of the noise sensitivity. For a review of the relationship between percolation theory and the analysis of Boolean functions we refer the reader to the book of Garban and Steif \cite{garban2014noise}. 
\subsection{Sharpness of the phase transition in statistical physics}

We call {\em lattice} a locally finite (vertex-)transitive infinite graph $\bbG=(\bbV,\bbE)$. An (unoriented) edge of the lattice is denoted $xy$. We also distinguish a vertex $0\in\bbV$ and call it the origin. Let $d(\cdot,\cdot)$ denote the graph distance on $\bbG$. Introduce a family of {\em non-negative} coupling constants $J=(J_{xy})_{xy\in \bbE}\in[0,\infty)^\bbE$ which is non-zero and invariant under a group acting transitively on $\bbV$. Notice that the coupling constants are necessarily finite-range (since the graph is locally finite). We call the pair $(\bbG,J)$ a {\em weighted lattice}.

Statistical physics models defined on a lattice are useful to describe  
%model
a large variety of phenomena and objects, ranging from ferro-magnetic materials to lattice gas. They also provide discretizations of Euclidean and Quantum Field Theories and are as such important from the point of view of theoretical physics. While the original motivation came from physics, they appeared as extremely complex and rich mathematical objects, whose study required the development of important new tools that found applications in many other domains of mathematics. 

One of the key aspects of these models is that they often undergo order/disorder phase transitions at a certain critical parameter $\beta_c$. The regime $\beta<\beta_c$, usually called the disorder regime, exhibits very rapid decay of correlations. While this property is usually simple to derive for very small values of $\beta$ using perturbative techniques, proving such a statement for the whole range of parameters $\beta<\beta_c$ is a difficult mathematical challenge. Nevertheless, having such a property is the key towards a deep understanding of the disordered regime. 

The zoo of lattice models is very diverse: it includes models of spin-glasses, quantum chains, random surfaces, spin systems, percolation models. One of the most famous example of a lattice spin model is provided by the Ising model introduced by Lenz to explain Curie's temperature for ferromagnets. This model has been generalized in many directions to create models exhibiting a wide range of critical phenomena. While the Ising model is very well understood, most of these natural generalizations remain much more difficult to comprehend. In this paper, we prove that the Potts model (one of the most natural of such generalization) undergoes a sharp phase transition, meaning that in the disordered regime, correlations decay exponentially fast. 
In order to do so, we will study the random-cluster representations of these models, which are often monotonic. The generalized OSSS inequality proved in Theorem~\ref{thm:OSSS} will play a key role in the proof. 

\paragraph{Exponential decay for the subcritical random-cluster model}

Since random-cluster models were introduced by Fortuin and Kasteleyn in 
1969~\cite{ForKas72}, they have become the archetypal example of dependent percolation models and as such have played an important role in the 
study of phase transitions. The spin correlations of Potts models are 
rephrased as cluster connectivity properties of their random-cluster 
representations. This allows the use of geometric techniques, thus 
leading to several important applications. While the understanding of the model on planar graphs progressed greatly in the past few years \cite{beffara2012self, duminil2017continuity, duminil2016discontinuity, DumRaoTas16}, the case of higher dimensions remained poorly understood. We refer to \cite{Gri06, Dum13} for books on the subject and a discussion of existing results.

The model is defined as follows. 
Consider a finite subgraph $G=(V,E)$ of a weighted lattice $(\bbG, J)$ and introduce the boundary $\partial G$ of $G$ to be the set of vertices $x\in G$ for which there exists $y\notin G$ with $xy$ an edge of $\bbE$. A {\em percolation configuration} $\omega=(\omega_{xy})_{xy\in E}$ is an element of $\{0,1\}^{E}$. A configuration $\omega$ can be seen as a subgraph of $G$ with vertex-set $V$ and edge-set given by $\{xy\in E:\omega_{xy}=1\}$. Let $k_{\rm f}(\omega)$ (resp.~$k_{\rm w}(\omega)$) be the number of connected components in $\omega$ (resp.~in the graph obtained from $\omega$ by considering all the vertices in $\partial G$ as one single vertex). 

Fix $q,\beta> 0$.
For $\#\in\{{\rm f},{\rm w}\}$, let $\phi_{G,\beta,q}^\#$ be the measure satisfying, for any $\omega\in\{0,1\}^E$,
$$\phi_{G,\beta,q}^\#(\omega)=\frac{q^{k_\#(\omega)}}{Z}\prod_{xy\in E}\big(e^{\beta J_{xy}}-1\big)^{\omega_{xy}},$$ 
where $Z$ is a normalizing constant introduced in such a way that $\phi_{G,\beta,q}^\#$ is a probability measure. The measures $\phi_{G,\beta,q}^{\rm f}$ and $\phi_{G,\beta,q}^{\rm w}$ are called the random-cluster measures on $G$ with respectively free and wired boundary conditions.
 For $q\geq 1$, the 
measures $\phi_{G,\beta,q}^\#$ can be extended to $\bbG$ -- the corresponding measure is denoted by $\phi_{\bbG,\beta,q}^\#$ -- by taking the weak limit of measures defined in finite volume. 

For notational convenience, we set $x\longleftrightarrow y$ if $x$ and $y$ are in the same connected component. We also write $x\longleftrightarrow Y$ if $x$ is connected to a vertex in $Y\subset \bbV$, and $x\longleftrightarrow \infty$ if the connected component of $x$ is infinite. Finally, let $\Lambda_n$ be the box of size $n$ around $0$ for the graph distance.

For $q\ge1$, the model undergoes a phase transition: there exists $\beta_c=\beta_c(\bbG)\in[0,\infty]$ satisfying
$$\theta(\beta):=\phi_{\bbG,\beta,q}^{\rm w}[0\longleftrightarrow\infty]=\begin{cases}=0&\text{ if $\beta<\beta_c$,}\\ >0&\text{ if $\beta>\beta_c$.}\end{cases}$$

The main theorem of this article is the following one.
\begin{theorem}\label{thm:RCM}
Fix $q \geq 1$ and consider the random-cluster model on a weighted lattice $(\bbG, J)$.
Then,
\begin{itemize}[noitemsep]
\item There exists $c>0$ such that $\theta(\beta)\ge c(\beta-\beta_c)$ for any $\beta\ge\beta_c$ close enough to $\beta_c$.
\item For any $\beta<\beta_c$ there exists $c_\beta>0$ such that for every $n\ge0$,
$$\phi_{\Lambda_n,\beta,q}^{\rm w}[0\longleftrightarrow \partial\Lambda_n]\le \exp[-c_\beta n].$$
\end{itemize}
\end{theorem}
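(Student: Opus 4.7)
The plan is to combine Theorem~\ref{thm:OSSS} with a family of exploration decision trees to obtain a differential inequality on the finite-volume one-arm probability, from which both items follow by standard ODE analysis. Throughout, set $f_n = \mathbf{1}_{0 \leftrightarrow \partial \Lambda_n}$, $\theta_n(\beta) = \phi^{\rm w}_{\Lambda_n,\beta,q}[f_n]$ and $\Sigma_n(\beta) = \sum_{k=0}^{n-1} \theta_k(\beta)$. Since $q \geq 1$, the measure $\phi^{\rm w}_{\Lambda_n,\beta,q}$ satisfies the FKG-lattice inequality and is therefore monotonic, so Theorem~\ref{thm:OSSS} applies to each increasing function $f_n$.

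For each $0 \leq k < n$, I would introduce the decision tree $T_k$ that reveals the cluster of $\partial \Lambda_k$ inside $\Lambda_n$ by breadth-first search: it repeatedly queries an edge with at least one endpoint already known to be connected to $\partial \Lambda_k$, until no such edge remains. The key observation is that $T_k$ determines $f_n$: any open path from $0 \in \Lambda_k$ to $\partial \Lambda_n$ must cross $\partial \Lambda_k$, so $f_n = 1$ iff the revealed cluster of $\partial \Lambda_k$ simultaneously contains $0$ and meets $\partial \Lambda_n$. An edge $e = uv$ is queried only when $u$ or $v$ has joined this cluster, so
$$\delta_e(f_n, T_k) \;\leq\; \phi^{\rm w}_{\Lambda_n,\beta,q}[u \leftrightarrow \partial \Lambda_k] + \phi^{\rm w}_{\Lambda_n,\beta,q}[v \leftrightarrow \partial \Lambda_k].$$
A stochastic comparison together with transitivity bounds each term by $\theta_{d(u,\partial \Lambda_k)}(\beta)$, and since each distance $m$ is achieved by at most two values of $k$, averaging over $k \in \{0,\dots,n-1\}$ gives the uniform estimate $\frac{1}{n}\sum_{k=0}^{n-1} \delta_e(f_n, T_k) \leq \frac{4\,\Sigma_n(\beta)}{n}$.

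Applying Theorem~\ref{thm:OSSS} to each $T_k$, averaging over $k$, and converting $\sum_e \mathrm{Cov}(f_n,\omega_e)$ into a constant multiple of $\theta_n'(\beta)$ through a Russo-type derivative identity for the random-cluster weights, yields the differential inequality
$$\theta_n'(\beta) \;\geq\; \frac{c\, n}{\Sigma_n(\beta)}\, \theta_n(\beta)\bigl(1 - \theta_n(\beta)\bigr).$$
A by now standard analysis of such an inequality splits the parameter axis at a critical value $\tilde{\beta}_c$: on the interval where $\Sigma_n(\beta)$ remains bounded as $n \to \infty$, integrating yields exponential decay of $\theta_n(\beta)$, which is the second item; on the complementary interval one has $\theta(\beta) > 0$, and integrating upward from $\tilde{\beta}_c$ produces the mean-field lower bound $\theta(\beta) \geq c(\beta - \tilde{\beta}_c)$, which is the first item. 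Identification of $\tilde{\beta}_c$ with $\beta_c$ follows from the definition of the critical point.

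The main technical obstacle is the passage from $\sum_e \mathrm{Cov}(f_n,\omega_e)$ to $\theta_n'(\beta)$. Unlike Bernoulli percolation, the random-cluster measure is not a product: the parameter $\beta$ enters through both the edge weights $e^{\beta J_{xy}} - 1$ and the normalizing constant $Z$, so one must verify that the pivotal-type terms produced by differentiation recombine into a positive multiple of the desired sum of covariances, uniformly in $n$ and the coupling constants $J$. A secondary difficulty is making the finite-volume revealment bound $\phi^{\rm w}_{\Lambda_n,\beta,q}[u \leftrightarrow \partial \Lambda_k] \leq \theta_{d(u,\partial \Lambda_k)}(\beta)$ rigorous, since wired boundary conditions can enhance connections near $\partial \Lambda_n$ and the comparison has to be steered carefully using transitivity and monotonicity of boundary conditions.
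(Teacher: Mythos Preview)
Your proposal is correct and follows essentially the same approach as the paper: Theorem~\ref{thm:OSSS} applied to exploration trees from $\partial\Lambda_k$, averaged over $k$, combined with the random-cluster derivative formula and an ODE lemma yielding the dichotomy. The two obstacles you flag are resolved cleanly in the paper: the derivative identity is the standard formula $\theta_n'(\beta)=\sum_{e}\frac{J_{xy}}{e^{\beta J_{xy}}-1}\,\mathrm{Cov}(f_n,\omega_e)$ (Grimmett, Theorem~3.12), which is already a positive multiple of the covariance sum, and the revealment comparison is made rigorous by taking the ambient box to be $\Lambda_{2n}$ rather than $\Lambda_n$, so that for every $x\in\Lambda_n$ one has $\Lambda_{2k}(x)\subset\Lambda_{2n}$ and monotonicity in boundary conditions plus transitivity give $\mu_n[x\leftrightarrow\partial\Lambda_k(x)]\le\mu_k[0\leftrightarrow\partial\Lambda_k]=\theta_k$ directly.
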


Theorem~\ref{thm:RCM} extends to quasi-transitive weighted graphs and to finite range interactions (for the latter, simply interpret finite-range models as nearest-neighbor models on a bigger graph).
 
For planar graphs, the result was proved for any $q\ge1$ under some symmetry assumption in \cite{duminil2016phase} (see also \cite{manolescu2016phase} for the case of planar slabs).
On $\bbZ^d$, the result was restricted to large values of $q$ \cite{laanait1991interfaces} and to the special cases of Bernoulli percolation ($q=1$) \cite{menshikov1986coincidence,aizenman1987sharpness, duminil2015new} and the FK-Ising model ($q=2$) \cite{AizBarFer87,duminil2015new}.
 
Numerous results about the subcritical regime have been proved under the assumption of exponential decay, and therefore Theorem~\ref{thm:RCM} transform them into unconditional results. To cite but a few, let us mention the Ornstein-Zernike theory of correlations \cite{campanino2008fluctuation}, the mixing properties of the model \cite{alexander2004mixing}, the bounds on the spectral gaps of the associated dynamics \cite{martinelli1999lectures}. The second item of Theorem~\ref{thm:RCM} could be replaced by $\phi_{\bbG,\beta,q}^{\rm w}[0\longleftrightarrow x]\le \exp[-c_\beta d(0,x)]$, but the stronger statement proved in the theorem is the one useful for these applications.

 \paragraph{Applications to computations of critical points for planar graphs} Another important application of Theorem~\ref{thm:RCM} is the computation of critical points of specific lattices.
In this section, we fix coupling constants to be equal to $1$ and set $p_c:=1-e^{-\beta_c}$. In general, the critical parameter $p_c$ is not expected to take any specific value. However, for the square, hexagonal and triangular lattices, the critical values can be predicted using duality. It is proved  in \cite[Theorem 6.17]{Gri06} that predicted values are indeed the critical ones under the assumption of exponential decay  for $p < p_c$. Therefore, our result provides an alternative proof of the following theorem. 
\begin{theorem}\label{thm:planarRCM}
  Fix $q\ge1$. If $y_c:=p_c/(1-p_c)$, we have  $$\begin{array}{ll}
    y_c^2-q=0&\text{on the square lattice,} \\
    y_c^3+3y_c^2-q=0  & \text{on the triangular lattice,}\\
    y_c^3-3qy_c-q^2=0 & \text{on the hexagonal lattice.}
  \end{array}$$
\end{theorem}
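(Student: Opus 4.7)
The plan is to combine Theorem~\ref{thm:RCM} with classical planar duality and the star--triangle transformation; indeed, the excerpt already observes that \cite[Theorem~6.17]{Gri06} computes the critical points on the square, triangular and hexagonal lattices under the sole hypothesis of exponential decay of connection probabilities in the subcritical regime, and this hypothesis is precisely what Theorem~\ref{thm:RCM} supplies. I describe the three steps of the argument.

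First, I would recall that on a planar (quasi-)transitive lattice $\bbG$ with planar dual $\bbG^*$, planar duality for the random-cluster model identifies the dual of $\phi^{\rm f}_{\bbG,p,q}$ with $\phi^{\rm w}_{\bbG^*,p^*,q}$, where the dual parameter $p^*$ is determined by
\[
\frac{p\, p^*}{(1-p)(1-p^*)} = q, \qquad \text{equivalently } y y^* = q \text{ with } y:=\tfrac{p}{1-p}.
\]
Using Theorem~\ref{thm:RCM} together with uniqueness of the infinite cluster, one obtains the general identity $y_c(\bbG)\, y_c(\bbG^*) = q$: if $y < y_c(\bbG)$, exponential decay forbids a primal infinite cluster, so duality produces an infinite dual cluster, forcing $y^* \ge y_c(\bbG^*)$; the reverse inequality is obtained by reversing the roles of primal and dual and applying Theorem~\ref{thm:RCM} on $\bbG^*$.

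Second, I would specialize to the three lattices. The square lattice is self-dual, so the identity $y_c y_c^* = q$ immediately yields $y_c^2 - q = 0$. The triangular and hexagonal lattices are planar duals of each other, which gives only the single relation $y_c(\triangle)\, y_c(\mathrm{hex}) = q$; a second relation is furnished by the star--triangle transformation, which preserves marginal connection probabilities of the random-cluster model when a symmetric star of parameter $y_Y$ is replaced by a triangle of parameter $y_\triangle$ satisfying a polynomial identity (and vice versa). Applying this at the parameter that makes the transformation map the triangular lattice to itself (equivalently, matches the two critical models), one obtains $y_c(\triangle)^3 + 3 y_c(\triangle)^2 - q = 0$; combined with $y_c(\triangle)\, y_c(\mathrm{hex}) = q$ and a straightforward algebraic manipulation, this yields $y_c(\mathrm{hex})^3 - 3q\, y_c(\mathrm{hex}) - q^2 = 0$.

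The real obstacle in the argument lies entirely upstream, in securing the exponential-decay hypothesis throughout the whole subcritical regime: this is exactly the content of Theorem~\ref{thm:RCM}, established in this paper via the generalized OSSS inequality of Theorem~\ref{thm:OSSS}. Given that input, the planar-duality and star--triangle manipulations summarised above are purely algebraic and present no further probabilistic difficulty; this is why the excerpt rightly describes the argument as an \emph{alternative proof} of a known result.
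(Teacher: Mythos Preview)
Your high-level strategy coincides with the paper's: the paper does not give a standalone proof of Theorem~\ref{thm:planarRCM} but simply observes that Theorem~\ref{thm:RCM} supplies the exponential-decay hypothesis under which \cite[Theorem~6.17]{Gri06} already computes the three critical points. Your first paragraph says exactly this, and at that level the proposal is correct and matches the paper.

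However, your explicit sketch of the duality identity $y_c(\bbG)\,y_c(\bbG^*)=q$ contains a genuine gap. The argument ``$y<y_c(\bbG)\Rightarrow$ exponential decay $\Rightarrow$ infinite dual cluster $\Rightarrow y^*\ge y_c(\bbG^*)$'' yields, after letting $y\uparrow y_c(\bbG)$, the inequality $y_c(\bbG)\,y_c(\bbG^*)\le q$. But ``reversing the roles of primal and dual and applying Theorem~\ref{thm:RCM} on $\bbG^*$'' gives $y'<y_c(\bbG^*)\Rightarrow (y')^*\ge y_c(\bbG)$, which after taking limits is again $y_c(\bbG)\,y_c(\bbG^*)\le q$ --- the \emph{same} inequality, not its reverse. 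Exponential decay alone, applied symmetrically, cannot close the gap.

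The missing half requires a non-coexistence input: if $y>y_c(\bbG)$ and $y^*>y_c(\bbG^*)$ both held, then $\omega$ and $\omega^*$ would each carry a (unique) infinite cluster, which is impossible. This is precisely how the paper proceeds in its proof of Theorem~\ref{thm:dualRCM}, invoking Theorem~\ref{thm:dual} for the direction $p_c(\bbG)^*\le p_c(\bbG^*)$. You do mention ``uniqueness of the infinite cluster'', which is the key hypothesis feeding into non-coexistence, but you then attribute the reverse inequality to the wrong mechanism. The same issue propagates to the square lattice (self-duality plus your one-sided inequality gives only $y_c^2\le q$) and to the triangular/hexagonal pair. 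If you either cite \cite[Theorem~6.17]{Gri06} outright, as the paper does, or replace the ``reversing roles'' sentence by a non-coexistence argument (Zhang-type, or the paper's Theorem~\ref{thm:dual}), the sketch becomes correct.
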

Note that for the square lattice, $p_c$ is equal to $\sqrt q/(1+\sqrt q)$.
This result was originally proved in \cite{beffara2012self}, where exponential decay of correlations below $p_c$ is proved using Russo-Seymour-Welsh type arguments and a generalization \cite{graham2006influence} of the KKL result \cite{kahn1988influence, bourgain1992influence}. 
 
The fact that our proof of exponential decay requires very few conditions on the graphs enables us to study critical points of general planar locally-finite doubly periodic graphs, i.e.~embedded planar graphs which are invariant under the action of some lattice $\Lambda\approx \bbZ\oplus \bbZ$. Denote  the dual of any planar graph $\bbG$ by $\bbG^*$.

 \begin{theorem}\label{thm:dualRCM}
  Fix $q\ge1$ and a planar locally-finite doubly periodic graph $\bbG$. We have  
\begin{equation}\label{eq:p}\frac{p_c(\bbG)p_c(\bbG^*)}{(1-p_c(\bbG))(1-p_c(\bbG^*))}= q.\end{equation}
\end{theorem}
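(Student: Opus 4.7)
The plan is to combine Theorem~\ref{thm:RCM} with the classical planar duality of the random-cluster model. Set $y(p):=p/(1-p)$, so that \eqref{eq:p} reads $y(p_c(\bbG))\,y(p_c(\bbG^*))=q$. For $p\in(0,1)$ let $p^*$ be the dual parameter characterized by $y(p)\,y(p^*)=q$; standard planar duality asserts that if $\omega\sim\phi^{\rm w}_{\bbG,p,q}$ then the dual configuration $\omega^*$ has law $\phi^{\rm f}_{\bbG^*,p^*,q}$. Since $\bbG^*$ is itself a locally finite planar doubly periodic graph, Theorem~\ref{thm:RCM} applies to it as well. Writing $y_c:=y(p_c(\bbG))$ and $y_c^*:=y(p_c(\bbG^*))$, I would establish the two inequalities $y_cy_c^*\ge q$ and $y_cy_c^*\le q$ separately, using each side of the sharp threshold.

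For $y_cy_c^*\ge q$, I would pick $p>p_c(\bbG)$, so that $\phi^{\rm w}_{\bbG,p,q}$ has an infinite cluster almost surely. Burton--Keane uniqueness together with planar topology rules out the coexistence of infinite primal and infinite dual clusters, so $\phi^{\rm f}_{\bbG^*,p^*,q}$ has no infinite cluster. Using that for $q\ge1$ on quasi-transitive graphs the free and wired critical thresholds coincide (a standard consequence of the differentiability of the free energy, see \cite{Gri06}), this yields $p^*\le p_c(\bbG^*)$, whence $y(p)\,y_c^*\ge q$; letting $p\searrow p_c(\bbG)$ closes this side.

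For $y_cy_c^*\le q$, I would pick $p<p_c(\bbG)$. Theorem~\ref{thm:RCM} supplies $c_p>0$ with $\phi^{\rm w}_{\Lambda_n,p,q}[0\leftrightarrow\partial\Lambda_n]\le e^{-c_p n}$. Union-bounding over the $O(n)$ starting points on one side of a fixed rectangle of diameter $O(n)$, the probability of an open primal crossing tends to $0$. Planar duality of crossings in a quad (an open primal horizontal crossing exists iff no open dual vertical crossing exists) then forces the corresponding dual crossing probability at parameter $p^*$ to tend to $1$. A standard FKG-based gluing argument over a nested sequence of $\Lambda$-periodic annuli produces dual open circuits around the origin at arbitrarily many scales, and hence an infinite dual cluster; therefore $p^*\ge p_c(\bbG^*)$ and $y(p)\,y_c^*\le q$, and letting $p\nearrow p_c(\bbG)$ finishes the argument.

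The main obstacle is carrying out these planar duality and gluing arguments for a general doubly periodic graph rather than $\bbZ^2$: one must work inside a $\Lambda\approx\bbZ\oplus\bbZ$-periodic quad, verify that the relation $y(p)\,y(p^*)=q$ really matches the primal--dual identification of random-cluster measures on $\bbG^*$, and handle boundary conditions carefully when passing to the infinite volume. All of the combinatorial and FKG ingredients are already in \cite{Gri06}; the new input that makes the statement unconditional is precisely Theorem~\ref{thm:RCM}, which supplies the exponential decay needed on the subcritical side for every $q\ge1$ and every such lattice.
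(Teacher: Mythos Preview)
Your outline matches the paper's own proof: two inequalities, with Theorem~\ref{thm:RCM} (exponential decay) on the subcritical side producing an infinite dual cluster, and a non-coexistence argument on the supercritical side. The paper's subcritical step phrases things via Borel--Cantelli on primal circuits rather than your crossing-plus-gluing variant, but these are interchangeable; and the paper, like you, appeals to the fact that the free measure percolates above $p_c$.

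The one place where you are underestimating the work is the supercritical half. You write that ``Burton--Keane uniqueness together with planar topology rules out the coexistence of infinite primal and infinite dual clusters''. Burton--Keane yields uniqueness of each infinite cluster separately, but \emph{non-coexistence} of a unique primal and a unique dual infinite cluster on a general doubly periodic planar graph is not a direct consequence of planar topology: the symmetric Zhang-type argument relies on reflection symmetries that $\bbG$ need not have. This statement is precisely Theorem~\ref{thm:dual} of the paper (originally due to Sheffield), and the authors give a separate crossing-probability proof of it before invoking it exactly where you do. Once that theorem is plugged in at your handwave, your argument and the paper's coincide.
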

 This result should be understood as a generalization of the famous statement $p_c(\bbG)+p_c(\bbG^*)=1$ for Bernoulli percolation.
The theorem is a consequence of duality, exponential decay for $p<p_c(\bbG)$ and the following non-coexistence result. For a configuration $\omega$ on $\bbG$, define a configuration $\omega^*$ in $\bbG^*$ by the formula $\omega^*_{e^*}=1-\omega_e$ for every edge $e$ of $\bbG$, where $e^*$ is the edge of $\bbG^*$ between the two vertices of $\bbG^*$ corresponding to the faces bordered by $e$.
\begin{theorem}\label{thm:dual}
There does not exist any translational invariant measure $\mu$ on a planar locally-finite doubly periodic graph  $\bbG$ satisfying 
\begin{itemize}[noitemsep,nolistsep]
\item {\em (FKG)} For any increasing events $A$ and $B$,
$\mu(A\cap B)\ge \mu(A)\mu(B)$.
\item Almost surely, there exists a unique infinite connected component in $\omega$ and in $\omega^*$. 
\end{itemize}
\end{theorem}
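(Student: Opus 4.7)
Suppose for contradiction that such a measure $\mu$ exists; I would derive a contradiction via a Zhang-type topological argument. The first step is to show that almost surely the unique infinite $\omega$-cluster $C$ and the unique infinite $\omega^*$-cluster $C^*$ are unbounded in each of the four cardinal directions of the $\bbZ^2$-action on $\bbG$. Consider the upward direction: by translation invariance, the quantity $\mu\big(\{\sup_{v\in C}v_2\le M\}\big)$ is independent of $M\in\bbZ$, and this common value must vanish, because the events are monotone in $M$ with intersection $\{C=\emptyset\}$, which has $\mu$-probability zero (as $C$ is infinite almost surely). The remaining three directions and the analogous statements for $C^*$ follow identically.

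Next I fix a large box $\Lambda_n$ adapted to the period lattice and partition $\partial\Lambda_n$ into four arcs $N,S,E,W$. For each side $i$, let $A_n^{i,\omega}$ denote the increasing event that some vertex on arc $i$ lies in an infinite connected component of $\omega\cap\Lambda_n^c$, and define $A_n^{i,\omega^*}$ analogously. The goal is to show $\mu(A_n^{i,\omega})\to 1$ and $\mu(A_n^{i,\omega^*})\to 1$ as $n\to\infty$ for each $i$. Applying the FKG inequality to the decreasing complementary events gives
\[\prod_i \mu\big((A_n^{i,\omega})^c\big)~\le~\mu\Big(\bigcap_i (A_n^{i,\omega})^c\Big)~=~1-\mu\Big(\bigcup_i A_n^{i,\omega}\Big),\]
and the right-hand side tends to $0$ because the union is the event that the infinite $\omega$-cluster meets $\Lambda_n^c$, which is almost certain by the first step. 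This yields $\max_i\mu(A_n^{i,\omega})\to 1$. Promoting this to the stronger statement $\mu(A_n^{i,\omega})\to 1$ for each individual $i$ is the main technical point; it must be handled by exploiting the $\bbZ^2$-translation invariance in each lattice direction separately, combined with the directional unboundedness established in the first step.

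Finally, by the union bound the event $A_n^{N,\omega}\cap A_n^{S,\omega}\cap A_n^{E,\omega^*}\cap A_n^{W,\omega^*}$ has $\mu$-probability arbitrarily close to $1$ for $n$ large, hence positive. On this event, uniqueness of the infinite $\omega$-cluster forces the $\omega$-branches escaping through $N$ and $S$ to merge (perhaps through $\Lambda_n$), yielding a connected $\omega$-subgraph $\Gamma$ containing a bi-infinite simple path from north-infinity to south-infinity. Any sufficiently large enclosing disk $D_R$ then contains a simple sub-path of $\Gamma$ from its top to its bottom which, by the Jordan curve theorem, separates $D_R$ into left and right regions. Symmetric reasoning produces a sub-path of a dual subgraph $\Gamma^*$ in $D_R$ from left to right, which must cross the primal sub-path inside $D_R$; but primal and dual paths never cross by planar duality, the desired contradiction. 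The hardest step, as noted, is the upgrade of $\max_i\mu(A_n^{i,\omega})\to 1$ to $\mu(A_n^{i,\omega})\to 1$ for each $i$: the classical Zhang argument for bond percolation on $\bbZ^2$ at the self-dual point uses the $\pi/2$-rotational symmetry of the lattice to equate the four quantities and dispatch them with a single application of the square-root trick, whereas for the general doubly periodic $\bbG$ here that symmetry is unavailable and only the translation invariance in the two lattice directions is at our disposal.
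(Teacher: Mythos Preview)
Your proposal follows the classical Zhang argument, and you have correctly identified the obstruction: without a $\pi/2$ rotation (or even a reflection) symmetry, there is no a priori relation between $\mu(A_n^{N,\omega})$, $\mu(A_n^{S,\omega})$, $\mu(A_n^{E,\omega})$, $\mu(A_n^{W,\omega})$, so the square-root trick only gives $\max_i \mu(A_n^{i,\omega})\to 1$. You state that the upgrade to all four sides ``must be handled by exploiting the $\bbZ^2$-translation invariance in each lattice direction separately, combined with the directional unboundedness,'' but you do not carry this out, and in fact translation invariance alone does not obviously yield it: translations do not swap opposite sides of a fixed box, and directional unboundedness of the infinite cluster does not force it to exit $\Lambda_n$ through a prescribed side (it may go around). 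As written, this is a genuine gap, not a routine step.

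The paper's proof circumvents this difficulty by an entirely different device. Rather than fixing a square box and trying to control all four sides, it works with rectangles $R=[0,n]\times[0,k]$ of \emph{variable aspect ratio} and proves a dichotomy for the crossing probabilities $h(n,k)$ and $v(n,k)$: as $\min\{n,k\}\to\infty$, one has $\max\{h(n,k),v(n,k+1)\}\to 1$ while (by duality) $\min\{h(n,k),v(n,k)\}\to 0$. The contradiction then comes from choosing, for each $n$, the threshold $k_n$ at which $v(n,\cdot)$ crosses $h(n,\cdot)$. The proof of the first item is itself the replacement for the missing symmetry: instead of comparing the four sides globally, one locates a ``balancing point'' $x=x(R)$ inside $R$ at which the probabilities of connecting a small box $S_x$ to the four sides satisfy a specific pattern of inequalities (found via a discrete intermediate-value argument on the two coordinates), and then uses FKG, uniqueness, and a one-step translation to build the required crossing. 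This balancing-point idea, together with a preliminary claim that $x(R)$ stays far from $\partial R$ (proved by a half-plane version of the same argument), is precisely what substitutes for the rotational symmetry you are missing. If you want to salvage a Zhang-type argument in this generality, you will end up needing a mechanism of this kind; the direct ``four sides'' approach does not close without it.
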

This result was proved in \cite{sheffield2005random} . It was also proved for percolation on self-dual polygon configurations in \cite{bollobas2010percolation}. Here, we present a proof which has also the advantage of being quite short.

\paragraph{Applications to the ferromagnetic $q$-state Potts model}
The Potts model \cite{Pot52} is one of the most fundamental example of a lattice spin model undergoing an order/disorder phase transition at a critical parameter $\beta_c$. It generalizes the Ising model by allowing the spins to take one of $q$ values. In two dimensions, the model has been the object of intense study in the past few years and  the behavior is fairly well understood, even at criticality \cite{ duminil2017continuity, duminil2016discontinuity}. In higher dimension, the understanding is limited to the case of the Ising model (i.e.~$q=2$) and of large $q$ \cite{AizFer86, aizenman2015random,KotShl82, laanait1991interfaces, biskup2003rigorous}. 

The model is defined as follows. Consider an integer $q\ge2$. For $G=(V,E)$ a finite subgraph of a weighted lattice $(\bbG, J)$, $\nu \in \{1, \dots, q\}$, and $\beta\ge0$, the {\em $q$-state Potts measure} with boundary condition $\nu$ is defined for any $\sigma=(\sigma_x)_{x\in V}\in \{1,\dots,q\}^V$ by
$$ \mathbb P^{\,\nu}_{G,\beta,q}[\sigma]~:=~\frac{\displaystyle \exp  ( -\beta H^\nu_{G,q}(\sigma) )}{\displaystyle\sum_{\sigma'\in\{1,\dots,q\}^V}\exp  ( -\beta H^\nu_{G,q}(\sigma) )}$$
where
$$H^\nu_{G,q}(\sigma)~:= - ~\sum_{xy\in E} J_{xy} \,\delta_{\sigma_x = \sigma_y} -    \sum_{\substack{xy\in E \\ x \in \partial G, \, y \notin G}} J_{xy} \,\delta_{\sigma_x = \nu} .$$

The model can be defined in infinite volume by taking the weak limit of measures on a nested sequence of finite graphs. The obtained measure $\mathbb P^\nu_{\bbG,\beta,q}$ is called the Potts measure with boundary conditions $\nu$.  The Potts model undergoes a phase transition between absence and existence of long-range order at the so-called critical inverse temperature $\beta_c$ (which depends on $\bbG$ and $J$), see  \cite{Gri06} for details.

\begin{theorem}\label{thm:Potts}
Fix an integer $q\ge2$ and consider the $q$-state Potts on a weighted lattice $(\bbG,J)$. Then, for $\beta<\beta_c$,  there exists $c_\beta>0$ such that for every $x\in\bbV$,
$$0\le\mathbb P^\nu_{\Lambda_n,\beta,q}[\sigma_0=\nu]-\tfrac1q\le \exp[-c_\beta n].$$
Furthermore, for the nearest-neighbor model on the square lattice,
$\beta_c(\bbZ^2)=\log(1+\sqrt q).$
\end{theorem}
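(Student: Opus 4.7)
The plan is to reduce Theorem~\ref{thm:Potts} to the random-cluster results already stated via the Edwards--Sokal coupling between the Potts and random-cluster models with cluster weight $q$. Recall that this coupling, applied with boundary condition $\nu$ on the Potts side, proceeds as follows: sample $\omega$ from $\phi^{\rm w}_{\Lambda_n,\beta,q}$, assign spin $\nu$ to every vertex in the connected component of $\partial\Lambda_n$, and assign independent uniform spins in $\{1,\ldots,q\}$ to every other cluster; the resulting spin configuration is distributed as $\mathbb P^\nu_{\Lambda_n,\beta,q}$. The parameters are linked by $p=1-e^{-\beta J_{xy}}$ on each edge.

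From the coupling I would immediately extract the identity
$$\mathbb P^\nu_{\Lambda_n,\beta,q}[\sigma_0=\nu]-\tfrac1q~=~\tfrac{q-1}{q}\,\phi^{\rm w}_{\Lambda_n,\beta,q}[0\longleftrightarrow \partial\Lambda_n],$$
which is manifestly nonnegative, giving the left-hand inequality. For the right-hand inequality, the coupling also identifies the Potts critical temperature with the random-cluster critical parameter $\beta_c$ (a standard consequence, see \cite{Gri06}), so that $\beta<\beta_c$ on the Potts side lies in the subcritical regime of the random-cluster model. The second item of Theorem~\ref{thm:RCM} then yields $\phi^{\rm w}_{\Lambda_n,\beta,q}[0\longleftrightarrow \partial\Lambda_n]\le \exp[-c_\beta n]$ for some $c_\beta>0$, which gives the announced bound after plugging it into the displayed identity.

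For the explicit value on the square lattice, the coupling correspondence $p=1-e^{-\beta}$ in the nearest-neighbor case yields $\beta_c(\bbZ^2)=-\log(1-p_c(\bbZ^2))$. By Theorem~\ref{thm:planarRCM} specialised to the square lattice, $p_c(\bbZ^2)=\sqrt q/(1+\sqrt q)$, so $1-p_c(\bbZ^2)=1/(1+\sqrt q)$ and hence $\beta_c(\bbZ^2)=\log(1+\sqrt q)$.

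There is no real obstacle in this proof once the random-cluster results are in hand: the Edwards--Sokal coupling and the matching of critical points are entirely standard, and all the genuine difficulty is already packaged inside Theorems~\ref{thm:RCM} and~\ref{thm:planarRCM}. The only item that requires a few lines of care is verifying that the wired boundary condition on the random-cluster side is the one that pairs with the boundary spin $\nu$ in the Potts Hamiltonian $H^\nu_{G,q}$, but this follows directly from the definition of the Hamiltonian given before the theorem statement.
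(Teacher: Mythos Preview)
Your proposal is correct and follows essentially the same route as the paper: reduce to the random-cluster model via the Edwards--Sokal coupling, then invoke Theorems~\ref{thm:RCM} and~\ref{thm:planarRCM}. The only cosmetic difference is that the paper writes the coupling identity with $\Lambda_{n+1}$ rather than $\Lambda_n$ on the random-cluster side (reflecting that the Potts Hamiltonian $H^\nu_{G,q}$ involves edges reaching one step outside $G$), which is exactly the ``few lines of care'' you flag at the end and which does not affect the exponential bound.
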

For the $2$-state Potts model, better known as the Ising model, the result goes back to \cite{AizBarFer87} (see also \cite{duminil2015new}). For the $q$-state Potts model with $q\ge3$, the result was restricted to either perturbative arguments involving the Pirogov-Sinai theory for $q\gg1$ or planar arguments (see the discussion on the random-cluster model). The question of deriving this property for $q\ge 3$ and $\bbZ^d$ with $d\ge3$ was open. 
Again, the flexibility in the choice of the lattice $\bbG$ implies that the result applies to finite range interactions. The statement of Theorem \ref{thm:Potts} is  stronger  than the statement  $\mathbb P^\nu_{\bbG,\beta,q} [\sigma_0 =  \nu] - \tfrac{1}{q} \le \exp[-c_\beta d (0,x)]$.

The Potts model and the random-cluster models on a weighted lattice $(\bbG,J)$ can be coupled (see \cite[Theorem 1.10]{Gri06} for details) in such a way that 
$$\mathbb P^\nu_{\Lambda_n,\beta,q}[\sigma_0=\nu]-\tfrac1q=\tfrac{q-1}q\phi_{\Lambda_{n+1},\beta,q}^{\rm w}[0\longleftrightarrow \partial \Lambda_{n+1}],$$
so that Theorem~\ref{thm:Potts} is a direct consequence of Theorems~\ref{thm:RCM} and \ref{thm:planarRCM}.

\paragraph{Other models.} The reasoning above should extend to other lattice spin models for which there exists a random-cluster representation which is monotonic. An archetypal example is provided by the Ashkin-Teller model; see \cite{baxter1982exactly} for details. It also extends to continuum percolation models such as Voronoi percolation \cite{DumRaoTas17}, occupied and vacant set of Boolean percolation \cite{DumRaoTas17-2}, massive Gaussian free field super-level lines.

\paragraph{Organization} The paper is organized as follows. In the next section we prove Theorem~\ref{thm:OSSS}. In the third section, we prove Theorem~\ref{thm:RCM} (we tried to isolate a few general statements which may be used for the proof of exponential decay for other models of statistical physics). In the last section, we describe the proof of Theorems~\ref{thm:dualRCM} and \ref{thm:dual}.

\paragraph{Acknowledgments} 
This research was supported by an IDEX grant from Paris-Saclay, a grant from the Swiss FNS, the ERC CriBLaM, and the NCCR SwissMAP. We thank Yvan Velenik, Alain-Sol Sznitman and Ioan Manolescu for many inspiring discussions and insightful comments.

\section{Proof of Theorem~\ref{thm:OSSS}}

The strategy is a combination of the original proof of the OSSS inequality for product measures (which is an Efron-Stein type reasoning), together with an encoding of monotonic measures in terms of iid random variables. 
  Assume that $E$ is finite and has cardinality $n$. Let $\vec E$ be the set of sequences $e=(e_1,\dots,e_n)$ where each element of $E$ occurs exactly once. Consider a monotonic measure $\mu$ on $\{0,1\}^E$.  
  
  We start by a useful lemma explaining how to construct $\omega$ with law $\mu$ from iid uniform random variables.
For $u\in[0,1]^n$ and $e\in \vec E$, define $F_e(u)=x$ inductively for $1\le t\le n$ by
   \begin{equation}\label{eq:ccc}x_{e_t}:=\begin{cases} 1&\text{ if }u_{t}\ge\mu[\omega_{e_{t}}=0\,|\,\omega_{e_{[t-1]}}=x_{e_{[t-1]}}],\\
   0&\text{ otherwise}.\end{cases}\end{equation}

\begin{lemma}\label{lem:01}
Let $\U$ be a iid sequence of uniform $[0,1]$ random variables, and $\e$ a random variable taking values in $\vec E$. Assume that for every $1\le t\le n$, $\U_t$ is independent of $(\e_{[t]},\U_{[t-1]})$, then $\X=F_{\e}(\U)$ has law $\mu$.
\end{lemma}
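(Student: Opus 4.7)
My plan is to proceed by induction on $n=|E|$. The base case $n=1$ is immediate: there is a single element in $\vec E$, so $\e_1$ is deterministic, and by \eqref{eq:ccc} with $\U_1$ uniform, $\mathbf{P}[\X_{\e_1}=1]=1-\mu[\omega_{\e_1}=0]=\mu[\omega_{\e_1}=1]$, so $\X\sim\mu$.

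For the inductive step from $n-1$ to $n$, I condition on the first exploration $(\e_1,\X_{\e_1})=(e_1,x_{e_1})$ and reduce to a size-$(n-1)$ problem. Three points drive the reduction. First, since $\U_1$ is independent of $\e_1$ by hypothesis, \eqref{eq:ccc} gives $\mathbf{P}[\X_{e_1}=x_{e_1}\mid\e_1=e_1]=\mu[\omega_{e_1}=x_{e_1}]$. Second, the conditional measure $\mu':=\mu[\,\cdot\mid\omega_{e_1}=x_{e_1}]$ on $\{0,1\}^{E\setminus\{e_1\}}$ is again monotonic: augmenting the conditioning in the defining inequality of monotonicity for $\mu$ by the extra fixed bit $\omega_{e_1}=x_{e_1}$ yields the corresponding inequality for $\mu'$. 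Third, conditional on $(\e_1,\U_1)$, the remaining variables $\e':=(\e_2,\ldots,\e_n)$ and $\U':=(\U_2,\ldots,\U_n)$ still satisfy the hypothesis of the lemma on the smaller ground set $E\setminus\{e_1\}$: the unconditional independence $\U_{t+1}\perp(\e_{[t+1]},\U_{[t]})$ implies, after conditioning on the sub-vector $(\e_1,\U_1)$, the required $\U'_t\perp(\e'_{[t]},\U'_{[t-1]})$. Moreover, the $\mu$-conditional probabilities appearing in \eqref{eq:ccc} for defining $\X_{\e_s}$ ($s\ge 2$) coincide, once $\omega_{e_1}=x_{e_1}$ is included in the conditioning, with the analogous $\mu'$-conditional probabilities, so that $(\X_{\e_s})_{s\ge 2}$ is exactly the $F$-construction associated with $(E\setminus\{e_1\},\mu')$ applied to $(\e',\U')$.

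The inductive hypothesis then ensures $(\X_e)_{e\in E\setminus\{e_1\}}\sim\mu'$ conditionally on $(\e_1,\X_{e_1})=(e_1,x_{e_1})$. Combining with the first point and summing over $e_1$,
\[
\mathbf{P}[\X=x]=\sum_{e_1}\mathbf{P}[\e_1=e_1]\,\mu[\omega_{e_1}=x_{e_1}]\,\mu'[\omega_{E\setminus\{e_1\}}=x_{E\setminus\{e_1\}}]=\mu(x)\sum_{e_1}\mathbf{P}[\e_1=e_1]=\mu(x).
\]
The main obstacle is the bookkeeping in the third point above: one must carefully verify that the hypothesis on $(\e,\U)$ is preserved by conditioning on the first exploration step. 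This reduces to the elementary observation that if a random variable is independent of a vector, it remains independent of any sub-vector after conditioning on the complementary sub-vector — a routine check, but the one place where the precise form of the hypothesis (independence of $\U_t$ from the \emph{entire} prefix $(\e_{[t]},\U_{[t-1]})$, not just from $\U_{[t-1]}$) is actually used.
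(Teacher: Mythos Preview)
Your inductive proof is correct, though it differs from the paper's approach. The paper proceeds by a direct chain-rule computation: for fixed $x\in\{0,1\}^E$ and $e\in\vec E$ with $\mathbb P[\X=x,\e=e]>0$, one writes
\[
\mathbb P[\X=x,\e=e]=\prod_{t=1}^n\mathbb P[\X_{e_t}=x_{e_t}\mid\e_{[t]}=e_{[t]},\X_{e_{[t-1]}}=x_{e_{[t-1]}}]\times \prod_{t=1}^n\mathbb P[\e_t=e_t\mid\e_{[t-1]}=e_{[t-1]},\X_{e_{[t-1]}}=x_{e_{[t-1]}}],
\]
uses the independence of $\U_t$ from $(\e_{[t]},\U_{[t-1]})$ to identify each factor in the first product with $\mu[\omega_{e_t}=x_{e_t}\mid\omega_{e_{[t-1]}}=x_{e_{[t-1]}}]$, so that this product telescopes to $\mu(x)$ regardless of $e$; summing over $e$ then finishes. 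This sidesteps the inductive bookkeeping you identify as the main obstacle (checking that the hypothesis survives conditioning on the first step), at the cost of carrying the full ordering $e$ explicitly throughout. Your approach, by contrast, makes transparent why the hypothesis is precisely what is needed to peel off one coordinate at a time.

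One minor point: your second reduction step, verifying that $\mu'=\mu[\,\cdot\mid\omega_{e_1}=x_{e_1}]$ is again monotonic, is unnecessary. Neither the statement nor the proof of the lemma uses monotonicity of $\mu$; the construction $F_e$ and the conclusion $\X\sim\mu$ hold for an arbitrary probability measure on $\{0,1\}^E$. Monotonicity enters the paper only later, in the proof of Theorem~\ref{thm:OSSS}.
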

  
\begin{proof}
Let  $x\in \{0,1\}^E$ and $e\in \vec E$ such that $\mathbb P[\X=x,\e=e]>0$. The probability $\mathbb P[\X=x,\e=e]$ can be written as
$$\prod_{t=1}^n\mathbb P[\X_{e_t}=x_{e_t}\,|\,\e_{[t]}=e_{[t]},\X_{e_{[t-1]}}=x_{e_{[t-1]}}]\times \prod_{t=1}^n\mathbb P[\e_t=e_t\,|\,\e_{[t-1]}=e_{[t-1]},\X_{e_{[t-1]}}=x_{e_{[t-1]}}].$$
(All the conditionings are well defined, since we assumed $\mathbb P[\X=x,\e=e]>0$.)
Since $\U_t$ is independent of $(\e_{[t]}, U_{[t-1]})$ (and thus $\X_{e_{[t-1]}}$), the definition \eqref{eq:ccc} gives 
$$\mathbb P[\X_{e_t}=x_{e_t}\,|\,\e_{[t]}=e_{[t]},\X_{e_{[t-1]}}=x_{e_{[t-1]}}]=\mu[\omega_{e_t}=x_{e_t}\,|\,\omega_{e_{[t-1]}}=x_{e_{[t-1]}}]$$
so that the first product is equal to $\mu[\omega=x]$ independently of $e$. Fixing $x\in \{0,1\}^E$, and summing on $e\in \vec E$ satisfying  $\mathbb P[\X=x,\e=e]>0$ gives
\begin{align*}
\mathbb P[\X=x]&=\sum_{e}\mathbb P[\X=x,\e=e]\\
&=\mu[\omega=x]\sum_{e}\prod_{t=1}^n\mathbb P[\e_t=e_t|\e_{[t-1]}=e_{[t-1]},\X_{e_{[t-1]}}=x_{e_{[t-1]}}]=\mu[\omega=x].
\end{align*}

\end{proof}

\begin{proof}[Theorem~\ref{thm:OSSS}]
Our goal is to apply a Lindenberg-type argument on a probability space in which $\e$ and $\X$ (sampled according to $\mu$) are coupled to an independent copy of $\X$  (denoted by $\Y^{n}$ below). We now present the coupling. 

Consider two independent sequences of iid uniform $[0,1]$ random variables $\U$ and $\V$. Write $\mathbb P$ for the coupling between these variables (and $\mathbb E$ for its expectation). 
Construct $(\e,\X,\tau)$ inductively as follows: set for $t\ge1$,  
\begin{align*}
 \e_{t}&=\begin{cases}
   e_1 & \text{if $t=1$}\\
   \phi_t(\e_{[t-1]},\X_{\e_{[t-1]}})&\text{if $t>1$}\end{cases}\quad\text{and}\quad \X_{\e_t}=\begin{cases} 1&\text{ if }\U_{t}\ge\mu(\omega_{\e_{t}}=0\,|\,\omega_{\e_{[t-1]}}=\X_{\e_{[t-1]}}),\\
   0&\text{ otherwise,}\end{cases}
  \end{align*}
and $\tau:=\min\big\{t\ge1:\forall x\in\{0,1\}^E,x_{\e_{[t]}}=\X_{\e_{[t]}}\Rightarrow f(x)=f(\X)\big\}$. Note that $\tau$ is equal to the stopping time defined in \eqref{eq:ddd}. Finally, for $0\le t\le n$, define $\Y^t:=F_\e(\mathbf W^t),$ where
$$\mathbf W^t:= \mathbf W^t (\U,\V) =(\V_1,\dots,\V_t,\U_{t+1},\dots,\U_\tau,\V_{\tau+1},\dots,\V_n)$$
(in particular $\mathbf{W} ^t$ is equal to $\V$ if $t\ge \tau$). 

Lemma~\ref{lem:01} applied to $(\U,\e)$ gives that $\X$ has law $\mu$ and is $\U$-measurable. Lemma~\ref{lem:01} applied to $(\V,\e)$  implies that $\Y^{n}$ has law $\mu$ and is independent of $\U$. Therefore, using that $f$ is valued in $[0,1]$, we deduce that
%\item The definition implies that 
%$(\Y^{0})_{\e_{[\tau]}}=\X_{\e_{[\tau]}}$. Furthermore, conditionally on $\U$, Lemma~\ref{lem:01} applied to the measure $\nu=\mu^\X_{E\setminus {\rm R}(Z)},Y,\e(Z))$ in $E\setminus {\rm R}(Z)$ implies that $\X^{0}_{E\setminus {\rm R}(Z)}$ has law $\mu^\X_{E\setminus {\rm R}(Z)}$.
%\item $\X$ is $Z$ measure and has law $\mu$,
%\item Conditionally on $Z$, $\X_{\mu,Z^{n},\e(Z)}$ has law $\mu$,
%\item  Conditionally on $Z$, $\X_{\mu,Z^{0},\e(Z)}$ coincides with $\X$ on ${\rm R}(Z):=\{\e_1(Z),\dots,\e_\tau(Z)\}$, and has law $\mu^\X_{E\setminus {\rm R}(Z)}$ on the other edges.
\begin{equation*}\label{eq:a}
{\rm Var}_\mu(f)\le \tfrac12\mu\big[|f-\mu[f]|\big]=\tfrac12\bbE\Big[\big|\,\bbE[f(\X)|\U]-\bbE[f(\Y^{n})|\U]\,\big|\Big]\le\tfrac12\mathbb E\big[|f(\X)-f(\Y^{n})|\big].
\end{equation*}
Since $f(\Y^{0})=f(\X)$ (the entries of $\Y^0$ for $t>\tau$ are irrelevant for the value of $f$ by definition of~$\tau$), the equation above implies
\begin{equation*}
  \label{eq:2}
  {\rm Var}_\mu(f)\le \tfrac12\mathbb E\big[|f(\Y^{0})-f(\Y^{n})|\big].
\end{equation*}
Since $\Y^t=\Y^{t-1}$ for any $t>\tau$, the right-hand side of the previous inequality is less than or equal to
\begin{align*}\sum_{t=1}^{n}\mathbb E \big[|f(\Y^t)-f(\Y^{t-1})|\big]&= \sum_{t=1}^{n}\mathbb E\Big[\,|f(\Y^t)-f(\Y^{t-1})|\,\cdot\mathbbm1_{t\le \tau}\Big]\\
&=\sum_{e\in E} \sum_{t=1}^{n}\mathbb E\Big[\mathbb E\big[|f(\Y^t)-f(\Y^{t-1})| ~\big|~\U_{[t-1]}\big]\,\mathbbm1_{t\le \tau,\e_t=e}\Big].
\end{align*}
Recalling that $\sum_{t=1}^{n}\mathbb P[t\le \tau,\e_t=e]=\delta_e(f,T),$
the proof of the theorem follows from the fact that on $\{t\le \tau,\e_t=e\}$,
\begin{equation}\mathbb E\big[\,|f(\Y^t)-f(\Y^{t-1})| ~\big|~\U_{[t-1]}\big]~\le~ 2\mathrm{Cov}_\mu (f, \omega_{e} ).\label{eq:g}\end{equation}
In order to show this, we now restrict ourself to the event $\{t\le \tau,\e_t=e\}$. First observe that $\Y^t _{e}= \Y^{t-1}_{e}$ implies $\Y^t = \Y^{t-1}$, and this together with the fact that $f$ is increasing implies
\begin{align}
|f(\Y^t)-f(\Y^{t-1})| &=(f(\Y^t)-f(\Y^{t-1}) ) \,(\Y^t_{e} - \Y^{t-1}_{e} )\nonumber\\
&=f(\Y^{t-1})\Y^{t-1}_{e}+  f(\Y^t)\Y^t_{e}- f(\Y^{t-1})\Y^t_{e}- f(\Y^t)\Y^{t-1}_{e}.\label{eq:tt}
\end{align}
Our goal is to average against $\bbE[\cdot|\U_{[t-1]}]$. In order to do this, we will use the following claim.\medbreak
\noindent
{\em {\bf Claim.} For any measurable $g$ and $t\le n$,}
\begin{equation}\label{eq:bbb} \mathbb E[g(\Y^t)|\U_{[t]}]=\mu[g(\omega)].\end{equation} 
\begin{proof} Conditioned on $\U_{[t]}$, the random vector
$\mathbf W^t$
is composed of iid uniform random variables satisfying that $\mathbf W^t_i$ is independent of $(\e_1,\dots,\e_i)$ for every $i\le n$. Therefore, Lemma~\ref{lem:01} applied to $(\e,\mathbf W^t)$ implies that the law of $\Y^t$ conditioned on $\U_{[t]}$ is $\mu$, which gives the claim.\end{proof} %We wish to estimate the conditional expectations with respect to $\bbP[\cdot|\U_{[t-1]}]$ of the four terms above.
%Therefore, 
%${\rm I}(\U_{[t-1]})\le 2\mathrm{Cov}_\mu (f, \omega_{e} )$ is a consequence of  the four  equations
%\begin{align}\label{eq:a1}\bbE[f( \Y^t) \Y^t_{e} \,|\, \U_{[t-1]}]&= \mu[f(\omega)\omega_{e}],\\
%\label{eq:a2}\bbE[f( \Y^{t-1}) \Y^{t-1}_{e} \,|\, \U_{[t-1]}]  &= \mu[f(\omega)\omega_{e}],\\
%\label{eq:a4}\bbE[f(\Y^{t-1})\Y^t_{e}\,|\,\U_{[t-1]}]&\ge \mu[f(\omega)]\mu[\omega_{e}],\\
%\label{eq:a3}\bbE[f(\Y^t)\Y^{t-1}_{e}\,|\,\U_{[t-1]}]&\ge \mu[f(\omega)]\mu[\omega_{e}].
%.\end{align}
Applying \eqref{eq:bbb} to $g(\omega)=f(\omega)\omega_{e}$ gives that (for the second equality we average on $\U_t$)
\begin{align} \label{eq:a1}
\bbE[f( \Y^{t-1}) \Y^{t-1}_{e} \,|\, \U_{[t-1]}]= \mu[f(\omega)\omega_{e}]=\bbE[f( \Y^t) \Y^t_{e} \,|\, \U_{[t-1]}].
\end{align}
%so that we only need to prove
%\begin{align}\bbE[f(\Y^{t-1})\Y^t_{e}|\U_{[t-1]}]&\ge\mu[f(\omega)]\mu[\omega_{e}]\quad\text{ and }\quad\bbE[f(\Y^t)\Y^{t-1}_{e}|\U_{[t-1]}]&\ge\mu[f(\omega)]\mu[\omega_{e}].\label{eq:bdc}\end{align}
%\label{eq:a2}  &= \mu[f(\omega)\omega_{e}],\\
%\label{eq:a4}\bbE[f(\Y^{t-1})\Y^t_{e}\,|\,\U_{[t-1]}]&\ge \mu[f(\omega)]\mu[\omega_{e}],\\
%\label{eq:a3}\bbE[f(\Y^t)\Y^{t-1}_{e}\,|\,\U_{[t-1]}]&\ge \mu[f(\omega)]\mu[\omega_{e}].
%.\end{align}
%Equation \eqref{eq:a1} follows from \eqref{eq:bbb}
%applied to $g(\omega)=f(\omega)\omega_{e}$ by averaging on $\U_t$. Similarly, Equation \eqref{eq:a2} follows from \eqref{eq:bbb} with index $t-1$ instead of $t$.
%For \eqref{eq:a3}, simply write
%\begin{align*}\bbE[f(\Y^t)\Y^{t-1}_{e}|\U_{[t-1]}]&\stackrel{\phantom{\eqref{eq:bbb}}}=\bbE[\bbE[f(\Y^t)|\U_{[t]}]\Y^{t-1}_{e}|\U_{[t-1]}]\\
%&\stackrel{\eqref{eq:bbb}}=\bbE[\mu[f(\omega)]\Y^{t-1}_{e}|\U_{[t-1]}]\stackrel{\eqref{eq:bbb}}=\mu[f(\omega)]\mu[\omega_{e}].\end{align*}
For fixed $\U_{[n]}$ and $s$, $\Y^s=F_\e(\mathbf{W}^s)$ is an increasing function of $\V$, by monotonicity of $\mu$. Since $f$ and $\mathbf W_e$ are increasing functions of $\mathbf V$, we deduce that $f(\Y^{t-1})$ and $\Y^t_e=F_\e(\mathbf{W})_e$ are increasing functions of $\V$. The FKG inequality applied to the  iid random variables $\V$ gives
%The monotonicity together with the fact that $f$ is increasing imply that $f(\Y^{t-1})$ and $\Y^t_{e}=F_\e(\mathbf Z)_{e}$ are increasing functions of $\V$. The FKG inequality for the iid random variables $\V$ gives\\
%\label{eq:cf}
\begin{align*}
\bbE[f(\Y^{t-1})\Y^t_{e}|\U_{[n]}]&\ge \bbE[f(\Y^{t-1})|\U_{[n]}]\bbE[\Y^t_{e}|\U_{[n]}].
\end{align*}
%,\\
%\bbE[f(\Y^t)\Y^{t-1}_{e}|\U_{[n]}]&\ge \bbE[f(\Y^t)|\U_{[n]}]\bbE[\Y^{t-1}_{e}|\U_{[n]}]\label{eq:cff}
%Now, $\Y^t_{e}$ and $\Y^{t-1}_{e}$ depend only on $\U_{[t]}$ and $\V$ so that we may replace the terms $\bbE[\Y^t_{e}|\U_{[n]}]$ and $\bbE[\Y^{t-1}_{e}|\U_{[n]}]$ by $\bbE[\Y^{t-1}_{e}|\U_{[t]}]$ and $\bbE[\Y^t_{e}|\U_{[t]}]$ in the two inequalities above.
%\medbreak
Taking the expectation with respect to $\bbE[\,\cdot\,|\U_{[t-1]}]$ gives
\begin{align}\label{eq:a2}
\bbE[f(\Y^{t-1})\Y^t_{e}|\U_{[t-1]}]&\ge \bbE\big[\bbE[f(\Y^{t-1})|\U_{[n]} \big]\bbE \big[\Y^t_{e}|\U_{[n]}]\,\big|\,\U_{[t-1]}\big]\nonumber\\
&= \bbE \big[f(\Y^{t-1})|\U_{[t-1]}\big]\bbE\big[\Y^t_{e}|\U_{[t-1]}\big]\stackrel{\eqref{eq:bbb}}=\mu[f(\omega)]\mu[\omega_{e}],
\end{align}
where we used that $\bbE[\Y^t_{e}|\U_{[n]}]$ is $\U_{[t-1]}$-measurable (since $\Y^{t}_{e}$ depends on $\U_{[t-1]}$ and $\V$ only). 

Similarly, $f(\Y^t)$ and $\Y^{t-1}_{e}$ are increasing functions of $\V$ so that using the FKG inequality and then taking the expectation with respect to $\bbE[\,\cdot\,|\U_{[t]}]$ gives
\begin{align*}\bbE[f(\Y^t)\Y^{t-1}_{e}|\U_{[t]}]&\ge \bbE\big[\bbE[f(\Y^t)|\U_{[n]}]\bbE[\Y^{t-1}_{e}|\U_{[n]}]\,\big|\,\U_{[t]}\big]\\
&= \bbE[f(\Y^t)|\U_{[t]}]\bbE[\Y^{t-1}_{e}|\U_{[t]}]\stackrel{\eqref{eq:bbb}}=\mu[f(\omega)]\bbE[\Y^{t-1}_{e}|\U_{[t]}].\end{align*}
This time, we used that $\bbE[\Y^{t-1}_{e}|\U_{[n]}]$ is $\U_{[t]}$-measurable. 
Taking the expectation with respect to  $\bbE[\,\cdot\,|\U_{[t-1]}]$ gives
$$\bbE[f(\Y^t)\Y^{t-1}_{e}|\U_{[t-1]}] \ge \mu[f(\omega)]\bbE[\Y^{t-1}_{e}|\U_{[t-1]}]\stackrel{\eqref{eq:bbb}}=\mu[f(\omega)]\mu[\omega_{e}].$$
This inequality together with \eqref{eq:a2}, \eqref{eq:a1} and \eqref{eq:tt} give \eqref{eq:g} and therefore concludes the proof.
\end{proof}

\begin{remark}
For most applications, one may replace covariances in the OSSS inequality by influences $I_e[f]:=\mu (f|\omega_e=1)-\mu(f|\omega_e=0)$ (we chose not to do so since applications in statistical physics to long-range models would for instance require the statement with covariances). In this case, we do not need to prove \eqref{eq:g} anymore and can replace the lengthly end of the proof by the following short argument.
Recall the dependency in the measure $\mu$ in $F_e(u)$ and write $F_e^\mu(u)$. With this notation, one sees that $F$ is both increasing in $u$ and in $\mu$ (for stochastic domination).
We deduce that both $\Y^{t-1}$ and $\Y^t$ are sandwiched between $\mathbf Z:=F_{\e}^{\mu[\cdot|\omega_e=0]}(\mathbf W^t)$ and $\mathbf Z':=F_{\e}^{\mu[\cdot|\omega_e=1]}(\mathbf W^t)$.
%, where $\mathbf W$ is defined in the proof of the claim above.  
Recall that $\mathbf W$ is independent of $\U_{[t-1]}$. Lemma~\ref{lem:01} and the fact that $f$ is increasing give us 
$$
\mathbb E\big[\,|f(\Y^t)-f(\Y^{t-1})| ~\big|~\U_{[t-1]}\big]\le \mathbb E[f(\mathbf Z')]-\mathbb E[f(\mathbf Z)]=\mu[f(\omega)|\omega_e=1]-\mu[f(\omega)|\omega_e=0]=I_e[f].$$
\end{remark}

\begin{remark}
Note that for the trivial decision tree discovering all the edges, for every edge the revealment is equal to 1 . As a consequence, we recover (in a very convoluted way) the discrete Poincar\'e inequality 
\begin{equation}
\label{eq:poincare}  \mathrm{Var}_\mu(f)~\le~   \sum_{e\in E}\, \mathrm{Cov}_\mu (f,\omega_e) .
\end{equation}
\end{remark}

\begin{remark}The proof of the previous statement can be extended in a trivial way as follows. First, we may consider countable sets $E$ by using a very simple martingale argument. Second, we may consider that $\tau$ is an arbitrary stopping time (with respect to the filtration $(\mathcal F_t=\sigma(\e_{[t]},\omega_{\e_{[t]}}))_{t\ge0}$), i.e.~that $f$ is not necessarily $\mathcal F_\tau$ measurable. By simply applying the previous lemma with $g=\mu[f|\mathcal F_\tau]$, we obtain the following result, which may be useful in statistical physics.
\end{remark}
\begin{theorem}
  Fix a countable set $E$ and an increasing function $f:\{0,1\}^E\longrightarrow [-1,1]$. For any monotonic measure $\mu$ on $\{0,1\}^E$, any decision tree $T$ and any stopping time $\tau$,  \begin{equation}
    \label{eq:OSSS2}
 \mathrm{Var}_\mu(f)~\le~    \sum_{e\in E}  \delta_e(f,T) \, \mathrm{Cov}_\mu (f,\omega_e)  ~+~\mu\big[|f-\mu[f| \mathcal F_ \tau]|\big]. 
  \end{equation}
\end{theorem}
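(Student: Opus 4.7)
The plan is to adapt the proof of Theorem~\ref{thm:OSSS} in two essentially independent directions. For the extension to countable $E$, I would use a standard $L^2$-martingale approximation: enumerate $E=(e_i)_{i\ge 1}$, approximate $f$ by the conditional expectations $f_n:=\mu[f\mid \omega_{e_1},\ldots,\omega_{e_n}]$, apply the finite-$E$ theorem to each $f_n$ (with the decision tree and stopping time truncated at depth $n$), and pass to the limit; the left-hand side converges by martingale convergence, and dominated/monotone convergence handles the sum over $e\in E$.

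For the extension to an arbitrary stopping time $\tau$, I would re-run the coupling $(\e,\U,\V,\X,\Y^0,\ldots,\Y^n)$ of the proof of Theorem~\ref{thm:OSSS}, but with $\tau$ the given stopping time instead of the minimal time at which $f$ becomes determined. Every step of that proof carries over without change --- in particular the telescoping and the key inequality \eqref{eq:g} --- except the trivial identity $f(\Y^0)=f(\X)$, which fails because the coordinates queried after $\tau$ may now affect the value of $f$. To absorb this discrepancy, I would decompose
$$|f(\X)-f(\Y^n)| \ \le\ |f(\X)-f(\Y^0)| + |f(\Y^0)-f(\Y^n)|,$$
apply the unchanged telescoping argument to the second term to produce $\sum_e \delta_e(f,T)\,\mathrm{Cov}_\mu(f,\omega_e)$ (with $\delta_e$ now defined relative to the given $\tau$), and handle the first term by introducing $g:=\mu[f\mid \mathcal F_\tau]$. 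Since $\tau$ is a stopping time, $g$ is a deterministic function of $(\e_{[\tau(\omega)]},\omega_{\e_{[\tau(\omega)]}})$; because $\X$ and $\Y^0$ coincide on exactly these coordinates by construction (they share the bits $\U_1,\ldots,\U_\tau$), one has $g(\X)=g(\Y^0)$, and a triangle inequality using that both $\X$ and $\Y^0$ are distributed as $\mu$ then yields $\bbE\bigl[|f(\X)-f(\Y^0)|\bigr]\le 2\mu[|f-g|]$.

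The main technical obstacle is justifying that $\Y^0$ has law $\mu$. This reduces to verifying the independence hypothesis of Lemma~\ref{lem:01} for the hybrid vector $\mathbf W^0=(\U_1,\ldots,\U_\tau,\V_{\tau+1},\ldots,\V_n)$, whose coordinates transition from $\U$ to $\V$ at the random time $\tau$. This is precisely where the stopping-time property $\{\tau\le t\}\in\mathcal F_t=\sigma(\e_{[t]},\X_{\e_{[t]}})$ enters: it guarantees that the event deciding whether $\mathbf W^0_t$ is drawn from $\U$ or from $\V$ is $(\e_{[t-1]},\mathbf W^0_{[t-1]})$-measurable, so conditionally on that past $\mathbf W^0_t$ remains a uniform random variable independent of $\e_{[t]}$. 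Combining the two bounds and converting variance to $L^1$-distance via $|f|\le 1$ then produces the stated inequality, the remaining numerical constants (stemming from the range $[-1,1]$ rather than $[0,1]$) being absorbed into the displayed form.
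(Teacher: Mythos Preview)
Your proposal is correct and is essentially a fleshed-out version of the paper's one-sentence sketch (the remark preceding the theorem): a martingale limit to pass to countable $E$, and the introduction of $g=\mu[f\mid\mathcal F_\tau]$ to control the discrepancy that arises when $\tau$ is no longer the minimal determining time. Your identification of the only nontrivial point---verifying the hypothesis of Lemma~\ref{lem:01} for the hybrid vectors $\mathbf W^t$ when the switch from $\U$ to $\V$ happens at a random (stopping) time---is exactly right, and your justification via $\{\tau\le t-1\}\in\sigma(\U_{[t-1]})$ is the intended one. The only loose end is the bookkeeping of constants: as written, your decomposition yields $\mathrm{Var}_\mu(f)\le 2\sum_e\delta_e\mathrm{Cov}_\mu(f,\omega_e)+2\mu[|f-g|]$ for $f\in[-1,1]$ (the factor $2$ in \eqref{eq:g} is no longer cancelled by the $\tfrac12$ available when $f\in[0,1]$), so ``absorbed into the displayed form'' is doing a bit of work---but this is a cosmetic issue the paper itself does not address.
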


    \section{Proof of Theorem~\ref{thm:RCM}}

  In order to be able to apply the strategy to other models, we state two useful lemmas.   
  \begin{lemma}\label{lem:technical}
Consider a converging sequence of increasing differentiable functions $f_n:[0,\beta_0]\rightarrow [0,M]$ satisfying
 \begin{equation}\label{eq:mlem}f_n'\ge \frac{n}{\Sigma_{n}}f_n
 \end{equation}
 for all $n\ge1$, where $\Sigma_n=\sum_{k=0}^{n-1}f_k$. Then, there exists $\beta_1\in[0,\beta_0]$ such that  
 \begin{itemize}
 \item[{\bf P1}] 
 For any $\beta<\beta_1$, there exists $c_\beta>0$ such that for any $n$ large enough,
 $f_n(\beta)\le \exp(-c_\beta n).$
  \item[{\bf P2}] 
  For any $\beta>\beta_1$, $\displaystyle f=\lim_{n\rightarrow \infty}f_n$ satisfies $f(\beta)\ge \beta-\beta_1.$
 \end{itemize}
 \end{lemma}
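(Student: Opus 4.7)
My plan is to set $\beta_1 := \sup\{\beta \in [0,\beta_0] : f(\beta) = 0\}$, where $f := \lim f_n$. Since $f$ is the pointwise limit of increasing functions, it is itself increasing, hence $f(\beta)=0$ on $[0,\beta_1]$ and $f(\beta) > 0$ on $(\beta_1,\beta_0]$. The engine of both properties is the Ces\`aro-type observation: $f_n(s) \to f(s)$ implies $\Sigma_n(s)/n \to f(s)$, so $n f_n(s)/\Sigma_n(s) \to 1$ at every $s$ with $f(s) > 0$.

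For \textbf{P2}, fix $\beta > \beta_1$ and $\alpha \in (\beta_1,\beta)$. Integrating \eqref{eq:mlem} gives $f_n(\beta)-f_n(\alpha) \ge \int_\alpha^\beta n f_n(s)/\Sigma_n(s)\,ds$. Since $f(s) > 0$ throughout $[\alpha,\beta]$, the integrand is non-negative and converges pointwise to $1$ in $n$. Fatou's lemma then yields $f(\beta)-f(\alpha) \ge \beta-\alpha$; letting $\alpha \to \beta_1^+$ and using $f(\alpha) \ge 0$ gives $f(\beta) \ge \beta-\beta_1$.

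For \textbf{P1}, fix $\beta < \beta_1$; I assume $\beta_1 < \beta_0$ (the edge case $\beta_1 = \beta_0$ needs a separate short treatment, and P2 becomes vacuous). The strategy is first to establish that $\Sigma_n(\beta_1)$ stays bounded in $n$, then deduce exponential decay at $\beta$. For the bound: $f(\beta_0) > 0$ plus Ces\`aro give $\Sigma_n(\beta_0) \le 2nf(\beta_0)$ for $n$ large; integrating \eqref{eq:mlem} over $[\beta_1,\beta_0]$ (and using that $\Sigma_n(s) \le \Sigma_n(\beta_0)$ by monotonicity of $\Sigma_n$ in $s$) yields $f_n(\beta_1) \le M \exp(-n(\beta_0-\beta_1)/(2f(\beta_0)))$, which is summable in $n$, so $C := \sum_n f_n(\beta_1) < +\infty$. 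A second application of \eqref{eq:mlem} on $[\beta,\beta_1]$, using $\Sigma_n(s) \le \Sigma_n(\beta_1) \le C$ throughout, then gives $f_n(\beta) \le f_n(\beta_1)\exp(-n(\beta_1-\beta)/C) \le M\exp[-c_\beta n]$ with $c_\beta = (\beta_1-\beta)/C > 0$.

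The main obstacle is the uniform boundedness of $\Sigma_n(\beta_1)$ in P1. Pointwise convergence $f_n(\beta_1) \to f(\beta_1) = 0$ is far too weak; what is needed is summability of $f_n(\beta_1)$, and this is where the subcritical and supercritical sides of $\beta_1$ must interact. It is precisely the ``buffer zone'' $(\beta_1,\beta_0]$ with $f > 0$, fed into the differential inequality, that produces the exponential bound on $f_n(\beta_1)$. The remaining steps---the Fatou interchange in P2 and the monotonicity of $\Sigma_n$ in $\beta$---are routine.
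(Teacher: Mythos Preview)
Your argument for \textbf{P2} is correct and in fact cleaner than the paper's: the paper introduces the auxiliary averages $T_n=\frac{1}{\log n}\sum_{i=1}^n f_i/i$ and bounds $T_n'$ from below via a telescoping logarithm, whereas your direct integration of \eqref{eq:mlem} together with Ces\`aro and Fatou reaches the same conclusion with less machinery.

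Your argument for \textbf{P1}, however, contains a fatal arithmetic slip. Integrating $f_n'/f_n \ge n/\Sigma_n$ over $[\beta_1,\beta_0]$ and using $\Sigma_n(s)\le \Sigma_n(\beta_0)\le 2n f(\beta_0)$ gives
\[
f_n(\beta_1)\ \le\ f_n(\beta_0)\,\exp\!\left(-\frac{n(\beta_0-\beta_1)}{\Sigma_n(\beta_0)}\right)\ \le\ M\,\exp\!\left(-\frac{\beta_0-\beta_1}{2f(\beta_0)}\right),
\]
not $M\exp(-n(\beta_0-\beta_1)/(2f(\beta_0)))$: the $n$ in the numerator cancels against the $n$ in $\Sigma_n(\beta_0)$. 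The bound is a constant independent of $n$, so $\sum_n f_n(\beta_1)$ has no reason to converge, and the bootstrap collapses.

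This is not repairable within your scheme. On the supercritical side $f>0$, hence $\Sigma_n\sim n f$ grows \emph{linearly}, and the inequality $f_n'/f_n\ge n/\Sigma_n$ degenerates to $f_n'/f_n\ge c$, which yields no decay in $n$; the ``buffer zone'' $(\beta_1,\beta_0]$ is therefore useless for producing summability at $\beta_1$. The paper avoids this by defining $\beta_1$ as $\inf\{\beta:\limsup_n \log\Sigma_n(\beta)/\log n\ge 1\}$. With that choice, for $\beta<\beta_1$ one has $\Sigma_n(\beta)\le n^{1-\alpha}$ for some $\alpha>0$, so $n/\Sigma_n\ge n^{\alpha}$ genuinely grows; integrating gives stretched-exponential decay at $\beta-\delta$ (hence summability), and a second integration upgrades this to exponential at $\beta-2\delta$. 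A posteriori the two definitions of $\beta_1$ coincide, but yours does not give direct access to the sublinear growth of $\Sigma_n$ needed to launch the argument. The edge case $\beta_1=\beta_0$ you defer is likewise not short under your approach, since the supercritical buffer you rely on then does not exist at all.
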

  
  \begin{proof}
%Since $f_n(\beta)\le f_1(\beta)\le f_1(\beta_0)$ for any $\beta\le \beta_0$ and $n\ge1$, set $c:=c'(1-f_1)$ so that for any $n\ge1$, \eqref{eq:mlem2} becomes
% \begin{equation}\label{eq:mlem}f_n'\ge c\frac{n}{\Sigma_n}f_n.\end{equation}
Define 
$$ \beta_1 := \inf \Big\{ \beta \, : \, \limsup_{n \rightarrow \infty} \frac{\log \Sigma_n(\beta)}{\log n} \geq 1 \Big\}. $$

\paragraph{Assume $\beta<\beta_1$.} Fix $\delta>0$ and set $\beta'=\beta-\delta$ and $\beta''=\beta-2\delta$. We will prove that there is exponential decay at $\beta''$ in two steps. 

First, there exists an integer $N$ and $\alpha>0$  such that $\Sigma_n(\beta) \leq n ^ {1-\alpha}$ for all $n \geq N$. For such an integer $n$, integrating $f_n' \geq n^{\alpha} f_n$  between $\beta'$ and $\beta$ -- this differential inequality follows from \eqref{eq:mlem}, the monotonicity of the functions $f_n$ (and therefore $\Sigma_n$) and the previous bound on $\Sigma_n(\beta)$ -- implies that
$$ f_n(\beta') \leq M\exp(-\delta \,n^\alpha),\quad\forall n\ge N.$$

Second, this implies that there exists $\Sigma < \infty$ such that $\Sigma_n(\beta') \leq \Sigma$ for all $n$. Integrating $f_n' \geq \tfrac{n}{\Sigma} f_n$ for all $n$ between $\beta''$ and $\beta'$ -- this differential inequality is again due to \eqref{eq:mlem}, the monotonicity of $\Sigma_n$, and the bound on $\Sigma_n(\beta')$ -- leads to
%This implies (by integrating \eqref{eq:mlem} between $p-\delta$ and $p-2\delta$)
\begin{equation*}f_n (\beta'') \leq M\exp(-\frac{\delta}{\Sigma} \,n),\quad\forall n\ge0.\label{eq:bb}\end{equation*}

\paragraph{Assume $\beta>\beta_1$.} For $n\ge 1$, define the function $T_n := \frac{1}{\log n} \sum_{i=1}^{n} \frac{f_i}{i}$. Differentiating $T_n$ and using \eqref{eq:mlem}, we obtain
$$T_n'~=~ \frac{1}{\log n}\, \sum_{i=1}^{n} \frac{f_i'}{i}  ~\stackrel{\eqref{eq:mlem}}{\geq}~ \frac{1}{\log n}\, \sum_{i=1}^{n} \frac{f_i}{\Sigma_{i}} ~\geq~ \frac{\log \Sigma_{n+1}-\log \Sigma_1}{\log n},$$
where in the last inequality we used that for every $i\ge1$,
$$\frac{f_i}{\Sigma_{i}} \geq \int_{\Sigma_{i}}^{\Sigma_{i+1}} \frac{dt}{t}=\log \Sigma_{i+1}-\log \Sigma_{i}.$$ 
For $\beta'\in(\beta_1,\beta)$, using that $\Sigma_{n+1}\ge\Sigma_n$ is increasing and integrating the previous differential inequality between $\beta'$ and $\beta$ gives
$$T_n (\beta) - T_n(\beta') \geq (\beta - \beta')\,  \frac{\log \Sigma_{n} (\beta')-\log M}{\log n}.$$
Hence, the fact that $T_n(\beta)$ converges to $f(\beta)$ as $n$ tends to infinity implies 
\begin{equation*} f (\beta) -f(\beta') %= \lim_{n \rightarrow \infty} T_n (p) -T_n(p_1)
~\geq~  (\beta - \beta')  \, \Big[\limsup_{n\rightarrow \infty} \frac{\log \Sigma_n (\beta')}{\log n}\Big]~\geq~ \beta- \beta'.
\end{equation*}
Letting $\beta'$ tend to $\beta_1$ from above, we obtain
$
f(\beta)\ge \beta-\beta_1.
$
\end{proof}

We now present an application of Theorem~\ref{thm:OSSS} to monotonic measures on $\{0,1\}^E$, where $E$ is the edge set of a finite graph $G=(V,E)$. Let $\Lambda_n(x)$ denote the box of size $n$ around $x\in V$ and write $\Lambda_n=\Lambda_n(0)$. We see elements of $\{0,1\}^E$ as percolation configurations and use the corresponding notation.
%Introduce
%$$\theta_n^\mu(x)=\mu[x\longleftrightarrow \partial\Lambda_n(x)]\quad\text{and}\quad S_n^\mu(x):=\sum_{k=0}^{n-1}\theta_k(x).$$
%If $0\in V$, we simply write $\Lambda_n$, $\theta_n^\mu$ and $S_n^\mu$ instead of $\Lambda_n(0)$, $\theta_n^\mu(0)$ and $S_n^\mu(0)$.
\begin{lemma}\label{cor:OSSS}
Consider a finite graph $G=(V,E)$ containing $0$. For any monotonic measure $\mu$ on $\{0,1\}^E$ and any $n\ge1$, one has
$$\sum_{xy\in E} {\rm Cov}_\mu(\mathbbm 1_{0\leftrightarrow\partial\Lambda_n},\omega_e)\ge \frac{n}{\displaystyle 4\max_{x\in \Lambda_n}\sum_{k=0}^{n-1}\mu[x\leftrightarrow \partial\Lambda_k(x)]}\cdot \mu[0\leftrightarrow \partial\Lambda_n]\big(1-\mu[0\leftrightarrow \partial\Lambda_n]\big).$$
  \end{lemma}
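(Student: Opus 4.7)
The plan is to apply the generalized OSSS inequality (Theorem~\ref{thm:OSSS}) to the increasing Boolean function $f := \mathbbm{1}_{0 \leftrightarrow \partial\Lambda_n}$, using a one-parameter family of exploration decision trees and then averaging over the parameter. For each $k \in \{0, 1, \ldots, n-1\}$, I will define a decision tree $T_k$ that performs a breadth-first exploration of the cluster of $\partial\Lambda_k$: initialize a set $C$ to $\partial\Lambda_k$ and, at each step, query (according to some fixed deterministic rule) an unrevealed edge having at least one endpoint in $C$, adding the other endpoint to $C$ if the edge is open. Because any path in $\omega$ from $0$ to $\partial\Lambda_n$ must cross $\partial\Lambda_k$ for every $k<n$ (a path reaching distance $n$ from $0$ contains a vertex at distance $k$ adjacent to a vertex at distance $k+1$, hence in $\partial\Lambda_k$), the value of $f$ is determined as soon as this cluster has been completely explored; remaining edges may be queried in an arbitrary fixed order to extend $T_k$ into a decision tree on all of $E$.

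The key observation is that an edge $e = xy$ can be queried by $T_k$ before the stopping time $\tau$ only if $x$ or $y$ ends up in the cluster of $\partial\Lambda_k$, so
$$\delta_e(f, T_k) ~\le~ \mu[x \leftrightarrow \partial\Lambda_k] + \mu[y \leftrightarrow \partial\Lambda_k].$$
Applying Theorem~\ref{thm:OSSS} with each $T_k$ and averaging the $n$ resulting inequalities over $k$ yields
$$\mathrm{Var}_\mu(f) ~\le~ \sum_{e \in E}\Big(\frac{1}{n}\sum_{k=0}^{n-1} \delta_e(f, T_k)\Big)\, \mathrm{Cov}_\mu(f, \omega_e).$$

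The last step is a re-centering. For any $x \in \Lambda_n$, the triangle inequality implies that every vertex of $\partial\Lambda_k$ lies at graph distance at least $|d(0,x)-k|$ from $x$, so $\{x\leftrightarrow \partial\Lambda_k\}$ is contained in $\{x \leftrightarrow \partial\Lambda_{|d(0,x)-k|}(x)\}$ (modulo a harmless off-by-one absorbed by $\mu[x\leftrightarrow\partial\Lambda_0(x)] = 1$). As $k$ varies over $\{0, \ldots, n-1\}$, the value $|d(0,x)-k|$ takes each non-negative integer at most twice, so
$$\sum_{k=0}^{n-1}\mu[x \leftrightarrow \partial\Lambda_k] ~\le~ 2\sum_{j=0}^{n-1}\mu[x\leftrightarrow\partial\Lambda_j(x)] ~\le~ 2\max_{x'\in \Lambda_n}\sum_{j=0}^{n-1}\mu[x'\leftrightarrow\partial\Lambda_j(x')].$$
Applying this to both endpoints of each edge, using $\mathrm{Var}_\mu(f) = \mu[0\leftrightarrow\partial\Lambda_n](1-\mu[0\leftrightarrow\partial\Lambda_n])$, and rearranging produces the desired inequality.

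The main obstacle I foresee is the re-centering step: translating sums of one-arm probabilities from $\partial\Lambda_k$ (centered at $0$) into sums from $\partial\Lambda_j(x)$ (centered at an arbitrary vertex $x$). The constant $4$ in the denominator arises as $2\times 2$: a factor $2$ per endpoint, from the ``each distance value is hit at most twice'' counting argument. The rest of the proof is a direct and natural application of OSSS, with the averaging over $k$ being the essential conceptual device; it produces a denominator of the form $\sum_j \mu[x\leftrightarrow\partial\Lambda_j(x)]$, which is precisely the quantity that will later play the role of $\Sigma_n$ in the differential inequality of Lemma~\ref{lem:technical}.
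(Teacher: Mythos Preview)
Your proof is correct and follows essentially the same approach as the paper: for each level $k$ you explore the cluster of $\partial\Lambda_k$ to obtain the revealment bound $\delta_e(f,T_k)\le \mu[x\leftrightarrow\partial\Lambda_k]+\mu[y\leftrightarrow\partial\Lambda_k]$, average the OSSS inequality over $k$, and then perform the re-centering via the triangle inequality exactly as the paper does. The only cosmetic difference is that the paper indexes $k$ over $\{1,\dots,n\}$ rather than $\{0,\dots,n-1\}$ and restricts the exploration to edges inside $\Lambda_n$, but this does not affect the argument.
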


The proof is based on  Theorem~\ref{thm:OSSS} applied to a well chosen decision tree determining $\mathbbm 1_{0\leftrightarrow\partial\Lambda_n}$. One may simply choose the trivial algorithm checking every edge of the box $\Lambda_n$. Unfortunately, the revealment of this decision tree being 1 for every edge, the OSSS inequality will not bring us more information that the Poincar\'e inequality \eqref{eq:poincare}. A slightly better algorithm would be provided by the decision tree discovering the connected component of the origin ``from inside''. Edges far from the origin would then be revealed by the algorithm if (and only if) one of their endpoints is connected to the origin. This provides a good bound for the revealment of edges far from the origin, but edges close to the origin are still revealed with large probability. In order to avoid this last fact, we will rather choose a family of decision trees discovering the connected components of $\partial\Lambda_k$ for $1\le k\le n$ and observe that the average of their revealment for a fixed edge will always be small.
\begin{figure}
\begin{center}\includegraphics[width=0.65\textwidth]{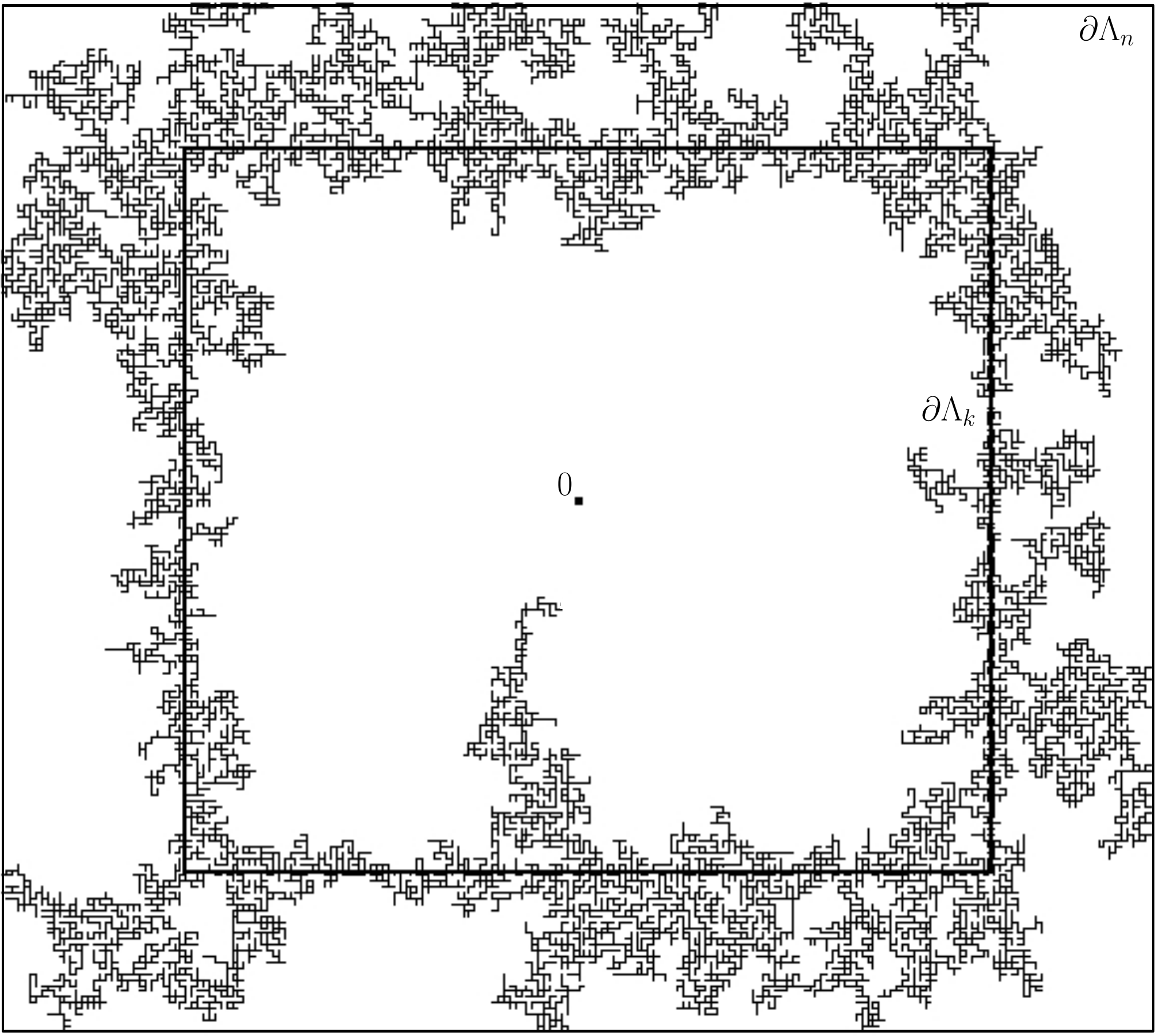}
\caption{\label{fig:algo} A realization of the clusters intersecting $\partial\Lambda_k$. Every edge having one endpoint in this set is potentially revealed by the decision tree before time $\tau$. Furthermore in this specific case, we know that $0$ is not connected to the boundary of $\Lambda_n$. }\end{center}
\end{figure}
\begin{proof} 
  We can assume that $\partial \Lambda_n$ is not empty (otherwise the statement is trivially true). For any $k\in \llbracket 1,n\rrbracket$, we wish to construct a decision tree $T$ determining $\mathbbm 1 _{0\leftrightarrow\partial\Lambda_n}$ such that for each $e=uv$,
\begin{equation} \label{eq:zzz}
\delta_e(T)\le \mu[u\longleftrightarrow \partial\Lambda_k]+\mu[v\longleftrightarrow \partial\Lambda_k].
\end{equation}
Note that this would conclude the proof since we obtain the target inequality by applying Theorem~\ref{thm:OSSS} for each $k$ and then summing on $k$. As a key, we use that for $u\in \Lambda_n$,
\begin{align*}\sum_{k=1}^n\mu[u\longleftrightarrow \partial\Lambda_k]
&~\le~ \sum_{k=1}^n  \mu[u\longleftrightarrow \partial\Lambda_{|k-d(u,0)|}(u)]~\le~ 2\max_{x\in \Lambda_n}\sum_{k=0}^{n-1}\mu[x\leftrightarrow \partial\Lambda_k(x)].
\end{align*}

We describe the decision tree $T$, which corresponds first to an exploration of the connected components in $\Lambda_n$ intersecting $\partial\Lambda_k$ that does not reveal any edge with both endpoints outside these connected components, and then to a simple exploration of the remaining edges.
 
More formally, 
we define $\e$ using two  growing sequences  $\partial\Lambda_k=V_0\subset V_1\subset \cdots\subset V$ and $\emptyset=F_0\subset F_1\subset\cdots\subset F$ (where $F$ is the set of edges between two vertices within distance $n$ of the origin) that should be understood as follows: at step $t$, $V_t$ represents the set of vertices that the decision tree found to be connected to $\partial \Lambda_k$, and $F_t$ is the set of explored edges discovered by the decision tree until time $t$.

Fix an ordering of the edges in $F$. Set $V_0=\partial\Lambda_k$ and $F_0=\emptyset$. Now, assume that $V_t\subset V$ and $F_t\subset F$ have been constructed and distinguish between two cases:\begin{itemize}[noitemsep,nolistsep]
\item If there exists an edge $e=xy\in F\setminus F_t$ with $x\in V_t$ and $y\notin V_t$ (if more than one exists, pick the smallest one for the ordering), then set $\e_{t+1}=e$, $F_{t+1}=F_t\cup\{e\}$ and set $$V_{t+1}:=\begin{cases}V_t\cup\{x\}&\text{ if }\omega_e=1\\ V_t&\text{ otherwise}.\end{cases}$$
\item If $e$ does not exist, set $\e_{t+1}$ to be the smallest $e\in F\setminus F_t$ (for the ordering) and set $V_{t+1}=V_t$ and $F_{t+1}=F_t\cup\{e\}$.\end{itemize}
As long as we are in the first case, we are still discovering the connected components of $\partial\Lambda_k$, while as soon as we are in the second case, we remain in it. The fact that $\tau$ is smaller than  or equal to the last time we are in the first case gives us \eqref{eq:zzz}.
\end{proof}
\begin{remark}
Note that $\tau$ may a priori be strictly smaller than the last time we are in first case (since the decision tree may discover a path of open edges from 0 to $\partial \Lambda_n$ or a family of closed edges disconnecting the origin from $\partial\Lambda_n$ before discovering the whole connected components of $\partial\Lambda_k$).
\end{remark}
%Note that as long as we are in the first case, we are still discovering the connected components of $\partial\Lambda_k$, while as soon as we are in the second case, we remain in it. Also note that $\tau$ may a priori be strictly smaller than the last time we are in first case (since the decision tree may discover a path of open edges from 0 to $\partial \Lambda_n$ or a family of closed edges disconnecting the origin from $\partial\Lambda_n$ before discovering the whole connected components of $\partial\Lambda_k$).
%\end{remark}

We are now in a position to prove Theorem~\ref{thm:RCM}. We will simply combine a derivative formula for random-cluster models with the previous lemma, and then apply Lemma~\ref{lem:technical}.
\begin{proof}[Theorem~\ref{thm:RCM}]
Fix $q\ge1$ and $\beta_0\ge0$. For $n\ge1$ and $\beta\le\beta_0$, define 
$$\mu_n:=\phi_{\Lambda_{2n},\beta,q}^{\rm w}\qquad\qquad\theta_k(\beta):=\mu_k[0\leftrightarrow \partial\Lambda_k]\qquad\qquad S_n:=\sum_{k=0}^{n-1}\theta_k.$$
Now, the comparison between boundary conditions \cite[Lemma 4.14]{Gri06} together with the facts that $\Lambda_{2k}(x)\subset\Lambda_{2n}$ and that $\bbG$ is transitive imply that for $x\in\Lambda_n$,
$$\sum_{k=1}^{n-1}\mu_n[x\leftrightarrow\partial\Lambda_k(x)]\le 2\sum_{k\le n/2}\mu_n[x\leftrightarrow\partial\Lambda_k(x)]\le 2\sum_{k\le n/2}\mu_k[0\leftrightarrow\partial\Lambda_k]\le 2S_n.$$
Since $\mu_n$ is monotonic \cite[Theorem 3.8]{Gri06}, Lemma~\ref{cor:OSSS} (applied to the graph $G=(\Lambda_{2n},E)$ induced by $\Lambda_{2n}$) and the previous bound give
\begin{equation}\label{eq:b}\sum_{e\in E}{\rm Cov}(\mathbbm 1_{0\leftrightarrow \partial\Lambda_n},\omega_e) ~\ge~\frac{n}{8S_n}\cdot \theta_n(1-\theta_n).\end{equation}
Now,
a derivative formula for random-cluster models \cite[Theorem 3.12]{Gri06} implies that
\begin{equation}\label{eq:Russo}\theta_n'(\beta)=\sum_{e\in E}\frac{J_{xy}}{e^{\beta J_{xy}}-1}{\rm Cov}(\mathbbm 1_{0\leftrightarrow \partial\Lambda_n},\omega_e)\ge \min \big\{\frac{J_{xy}}{e^{\beta_0 J_{xy}}-1}\big\} \sum_{e\in E}{\rm Cov}(\mathbbm 1_{0\leftrightarrow \partial\Lambda_n},\omega_e).\end{equation}
Notice that the minimum above is positive (since the coupling constants are finite-range and invariant). Inequalities \eqref{eq:b} and \eqref{eq:Russo} together lead to
 \begin{equation}\label{eq:mlem3}\theta_n'\ge c\,\frac{n}{S_n}\cdot\theta_n,\end{equation}
 where $$c=c(\beta_0):=\frac{1-\theta_1(\beta_0)}{8}\min \Big\{\frac{J_{xy}}{\exp(\beta_0 J_{xy})-1}\Big\}>0$$ (we used that $\theta_n\le\theta_1$ by comparison between boundary conditions and then monotonicity and $\beta\le\beta_0$). 
Measurability implies $\limsup\theta_n=\theta$ while the comparison between boundary conditions gives that $\theta_n\ge \theta$ (for all $n$) so that $\theta_n$ converges to $\theta$.
Lemma~\ref{lem:technical} applied to $f_n=\theta_n/c$ gives the existence of $\beta_1$ such that {\bf P1} and {\bf P2} occur.

%To conclude, measurability implies $\limsup\theta_n=\theta$ while the comparison between boundary conditions gives that $\theta_n\ge \theta$ (for all $n$) so that $\theta_n$ converges to $\theta$.
Also, for every $n\ge1$,
$$\phi_{\Lambda_{2n},\beta,q}^{\rm w}[0\longleftrightarrow \partial\Lambda_{2n}]\le\theta_n(\beta).$$
Overall, the two previous facts combined with {\bf P1} and {\bf P2} implies the theorem readily (note that when $\beta_c<\infty$, $\beta_1=\beta_c$ as soon as $\beta_0$ is chosen larger than $\beta_c$).
\end{proof}

\section{Proofs of Theorems~\ref{thm:dualRCM} and \ref{thm:dual}}
Without loss of generality, we may assume that $\bbG$ and $\bbG^*$ are embedded in such a way that $\Lambda$ is the set of translations of $\bbZ^2$. We see configurations $\omega$ and $\omega^*$ as subsets of $\bbR^2$ given by the union of the open edges. 
For three sets $A,B,C\subset \bbR^2$,  denote the event that $\omega\cap C$ contains a continuous path from $A$ to $B$ by $A\stackrel{C}\longleftrightarrow B$. 

Let us start by explaining how Theorem~\ref{thm:dualRCM} follows from Theorems~\ref{thm:RCM} and \ref{thm:dual}.

\begin{proof}[Theorem~\ref{thm:dualRCM}]
If $\omega$ has law $\phi_{\bbG,p,q}^{\rm w}$ (we write $p$ in the subscript of the measure instead of $\beta$) and $\omega^*$ is defined by the formula  $\omega^*_{e^*}=1-\omega_e$, then the  duality  \cite[Theorem 6.13]{Gri06} for random-cluster models states that $\omega^*$ has law $\phi_{\bbG^*,p^*,q}^{\rm f}$, where
%The duality  \cite[Theorem 6.13]{Gri06} for random-cluster models states that if $\omega^*$ is defined by the formula  $\omega^*_{e^*}=1-\omega_e$, then if $\omega$ has law $\phi_{\bbG,p,q}^{\rm w}$ (we write $p$ in the subscript of the measure instead of $\beta$), then $\omega^*$ has law $\phi_{\bbG^*,p^*,q}^{\rm f}$, where 
$$\frac{pp^*}{(1-p)(1-p^*)}= q.$$
 In particular, we need to prove that $p_c(\bbG)^*=p_c(\bbG^*)$. The second item of Theorem~\ref{thm:RCM}, for quasi-transitive graphs, implies that for any $p<p_c(\bbG)$,
$$\sum_{n\ge0}\phi^{\rm w}_{\bbG,p,q}\Big[[n,n+1]\times[0,1]\longleftrightarrow\{0\}\times\bbR\Big]<\infty.$$ 
The Borel-Cantelli lemma implies that there exist only finite circuits of $\omega$ surrounding the origin almost surely. Therefore, by duality, there exists an infinite connected component in $\omega^*$ almost surely, which proves that $p^*\ge p_c(\bbG^*)$. Letting $p$ tend to $p_c(\bbG)$ gives $p_c(\bbG)^*\ge p_c(\bbG^*)$.

On the other hand, ergodic properties of $\phi_{\bbG,p,q}^{\rm w}$ imply that when $p>p_c(\bbG)$, $\omega$ contains a unique infinite connected component almost surely (see \cite{Gri06}). Similarly, if $p^*$ was greater than $p_c(\bbG^*)$, $\omega^*$ would contain a unique infinite connected component almost surely (this uses a known fact \cite{Gri06} that, above the critical point, the random-cluster model with free boundary conditions also contains an infinite connected component almost surely). Therefore, Theorem~\ref{thm:dual} shows that $p>p_c(\bbG)$ implies $p^*\le p_c(\bbG^*)$. Letting $p$ tend to $p_c(\bbG)$  gives $p_c(\bbG)^*\le p_c(\bbG^*)$.
\end{proof}
We now turn to the proof of Theorem~\ref{thm:dual}.
\begin{proof}[Theorem~\ref{thm:dual}]
%It is classical (see Let us recall the duality relation for random-cluster models (see \cite[Theorem 6.13]{Gri06} for details). 
%Consider a weighted lattice $(\bbG,J)$ together with its dual $(\bbG,J^*)$ defined with $J^*_{e^*}=J_e$ for every $e\in \bbG$. If $\omega$ has law $\phi_{\bbG,\beta,q}^{\rm w}$, then $\omega^*$ has law $\phi_{\bbG^*,\beta^*,q}^{\rm f}$, where $\beta^*$ satisfies &$h(\beta^*) h(\beta) = q$.
%\begin{equation} \label{eq:duality} 
%(e ^{\beta^*} - 1) (e^\beta -1)= q  .
%\end{equation}
%\bigbreak
For $R=[0,n]\times[0,k]$, denote $\mathsf {Top}$, $\mathsf {Left}$, $\mathsf {Bottom}$ and $\mathsf {Right}$ for the top, left, bottom and right sides of the boundary of $R$. Also, define the crossing probabilities
\begin{align*}
v(n,k)&:=\mu[\mathsf {Top}\stackrel{R}\longleftrightarrow\mathsf {Bottom}]\qquad \text{and}\qquad h(n,k):=\mu[\mathsf {Left}\stackrel{R}\longleftrightarrow\mathsf {Right}].
\end{align*}
\begin{lemma}\label{lem:hg}
  Assume that both $\omega$ and $\omega^*$ contain a unique infinite connected components almost surely. Then, as $\min\{n,k\}$ tends to infinity,
\begin{itemize}[noitemsep,nolistsep]
\item $\max\big\{h(n,k),v(n,k+1)\big\}$ tends to 1,
\item $\min\big\{v(n,k),h(n,k)\}$ tends to 0.
\end{itemize}
\end{lemma}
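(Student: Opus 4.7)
The two items are related by planar duality. The push-forward of $\mu$ under $\omega\mapsto\omega^*$ is again a translation-invariant FKG measure on $\bbG^*$, and both of its associated primal and dual configurations have unique infinite clusters (the hypothesis is self-dual). The usual planar duality on a doubly periodic lattice identifies ``no primal horizontal crossing of $[0,n]\times[0,k]$'' with ``a dual vertical crossing of a rectangle of approximately the same dimensions''; the shift $(n,k)\mapsto(n,k+1)$ in Item~1 accounts for the one-step mismatch between primal and dual rectangles. It follows that Item~1 for $\mu$ coincides with Item~2 applied to the push-forward measure, so it suffices to prove Item~2.

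\paragraph{Zhang-type argument for Item~2.} Suppose for contradiction that $h(n_j,k_j),v(n_j,k_j)\ge\delta$ along a sequence with $\min(n_j,k_j)\to\infty$. By FKG, a primal horizontal and a primal vertical crossing of $R_j:=[0,n_j]\times[0,k_j]$ both occur with probability at least $\delta^2$; by planarity these two crossings must intersect, so they belong to a single primal cluster meeting all four sides of $R_j$. Using translation invariance and FKG to combine such ``cross'' events over overlapping translates of $R_j$ in each coordinate direction (a horizontal crossing of one rectangle and a vertical crossing of a suitably overlapping rectangle are forced by planarity to meet in the overlap, merging the underlying clusters), one obtains with probability bounded below by some $c>0$ a primal ``octopus'' cluster with four unbounded arms, one in each cardinal direction. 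By the almost sure uniqueness of the infinite primal cluster, the octopus lies in the unique infinite primal cluster, and its four arms planarly partition the complement of a large bounded region into four unbounded sectors. The infinite dual cluster, being connected, must be entirely contained in a single such sector, so with probability at least $c$ it avoids three of the four sectors; this contradicts the fact that, by translation invariance of the push-forward measure, every dual vertex belongs to the infinite dual cluster with the same positive density.

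\paragraph{Main obstacle.} The main technical difficulty is the gluing step: producing arms that genuinely extend to infinity (rather than merely to arbitrarily large finite distances) requires a multi-scale FKG construction together with a Borel--Cantelli-type argument exploiting positive correlations, and the topological claim that overlapping horizontal and vertical crossings necessarily meet in the overlap relies delicately on the planarity of $\bbG$ and on a careful choice of the overlapping geometry.
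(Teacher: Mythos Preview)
Your duality reduction between the two items is fine in spirit (the paper actually goes the other way, deducing Item~2 from Item~1, but either direction can be made to work once the off-by-one shift is handled carefully). The real problems lie in your argument for Item~2.

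\textbf{The gluing to infinite arms fails.} From $h(n_j,k_j),v(n_j,k_j)\ge\delta$ you get a cross in $R_j$ with probability at least $\delta^2$, but to separate the plane into sectors you need \emph{unbounded} arms, hence infinitely many glued crossings. FKG only gives a product lower bound, so the probability of the intersection of $m$ such events is at least $\delta^{O(m)}\to 0$. The ``multi-scale plus Borel--Cantelli'' escape you propose does not work: there is no Borel--Cantelli lemma for intersections of positively correlated events, and even if infinitely many translated crosses occur almost surely, nothing forces the successful ones to line up into a single connected chain reaching infinity. In the classical Zhang argument on $\bbZ^2$ this obstacle is absent because one works instead with the event ``a given side of a large box is connected to $\infty$ in its complement'', and reflection symmetry plus the square-root trick pushes that probability to $1$; on a general doubly periodic graph you have neither that symmetry nor probabilities close to~$1$.

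\textbf{Even granting the octopus, there is no contradiction.} Your octopus occurs with some fixed $c>0$, not with probability tending to~$1$. On that event the unique dual infinite cluster lies in one (random) sector, so for some fixed sector, with probability at least $c/4$, no dual vertex there belongs to the dual infinite cluster. This only yields $\theta^*\le 1-c/4$ for the dual density, which is no contradiction at all. A Zhang-type conclusion requires the blocking event with probability close to~$1$, which loops back to the first gap.

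The paper's route is entirely different and avoids any infinite gluing. It proves Item~1 directly: inside $R=[0,n]\times[0,k]$, an intermediate-value argument locates a point $x=x(R)$ where the probabilities that a small box $S_x$ connects within $R$ to each of the four sides are ``balanced'' against their opposites, together with neighbours $x',x''$ at which the relevant inequalities flip. A short FKG computation, combined with uniqueness of the infinite cluster, then forces one of $h(n,k)$ or $v(n,k+1)$ to be at least $\mu[S\longleftrightarrow\infty]-o(1)$. The one nontrivial input is that $x(R)$ stays far from $\partial R$ as $\min\{n,k\}\to\infty$; this is reduced to the claim that $\omega\cap\bbH$ (a half-plane) contains no infinite cluster, which is proved by a short half-plane version of the same balancing idea.
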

%Before proving this lemma, let us explain how it implies the theorem. For each $n$, let $k=k(n)$ be the largest integer for which $v(n,k)\ge h(n,k)$ (note that by definition $v(n,k+1)<h(n,k+1)$). The uniqueness of the infinite connected component easily implies that $k$ tends to infinity as $n$ tends to infinity (for each fixed $k$, the probability that the infinite connected component crosses $[-n,n]\times[0,k]$ from top to bottom tends to 1 as $n$ tends to infinity). 
%
%Now, if both $\omega$ and $\omega^*$ contain infinite connected components almost surely, the first item of the previous lemma implies that $h(n,k)$ or $v(n,k+1)$ tends to 1. Therefore, $\min\{v(n,k),h(n,k)\}$ or $\min\{v(n,k+1),h(n,k+1)\}$ tends to 1, leading to a contradiction with the second item. 
Before proving this lemma, let us explain how it implies the theorem. For each $n$, let $k_n$ be the largest integer for which $v(n,k_n)\ge h(n,k_n)$ (note that by definition $v(n,k_n+1)<h(n,k_n+1)$). The uniqueness of the infinite connected component easily implies that $k_n$ tends to infinity as $n$ tends to infinity (for each fixed $k$, the probability that both the infinite connected component and the dual infinite connected component cross $[0,n]\times[0,k]$ from top to bottom tends to 1 as $n$ tends to infinity). 

Now, if both $\omega$ and $\omega^*$ contain infinite connected components almost surely, the first item of the previous lemma implies that $h(n,k_n)$ or $v(n,k_n+1)$ tends to 1. This implies that 
$\min\{v(n,k_n), h(n,k_n)\}$ 
or $\min\{v(n,k_n+1),h(n,k_n+1)\}$ tends to 1, leading to a contradiction with the second item. 

\begin{proof}[Lemma~\ref{lem:hg}]
We prove the first item. The second item is implied by the first one (with the roles of $\omega^*$ and $\omega$ exchanged) since $1-v(n,k)$ and $1-h(n,k)$ are the probabilities that $R$ is respectively crossed horizontally and vertically by a path in $\omega^*$.

Fix $n,k$ and $s$ (that should be thought of as satisfying $1\ll s\ll \min\{n,k\}$). Let $S_y$ be the translate of $S:=[0,s]^2$ by $y\in\bbZ^2$. Define $x=x(R)\in R\cap\bbZ^2$ such that there exists $x'$ and $x''$ neighbors of $x$ in $\bbZ^2$ satisfying
\begin{align}
&\mu[S_x\stackrel{R}\longleftrightarrow \mathsf{Bottom}]\ge \mu[S_x\stackrel{R}\longleftrightarrow \mathsf{Top}],\label{eq:ggg}\\
&\mu[S_x\stackrel{R}\longleftrightarrow \mathsf{Left}]\ge \mu[S_x\stackrel{R}\longleftrightarrow \mathsf{Right}],\label{eq:g11}\\
&\mu[S_{x'}\stackrel{R}\longleftrightarrow \mathsf{Top}]\ge\mu[S_{x'}\stackrel{R}\longleftrightarrow \mathsf{Bottom}],\label{eq:gggg}\\
&\mu[S_{x''}\stackrel{R}\longleftrightarrow \mathsf{Right}]\ge \mu[S_{x''}\stackrel{R}\longleftrightarrow \mathsf{Left}]\label{eq:gg}.
\end{align}
In order to see that this point exists, let $X$ be the set of $x\in \bbZ^2\cap R$ such that \eqref{eq:ggg} holds and denote its boundary in $R$ (i.e.~the set of points in $X$ with one neighbor in $R\setminus X$) by $\partial X$. Let $Y$ and $\partial Y$ be defined similarly with \eqref{eq:g11} instead of \eqref{eq:ggg}. (The sets $X$ and $Y$ are illustrated on Fig.~\ref{fig:xdef}.)  Note that $\partial X\cap\partial Y\ne \emptyset$ since  $\partial X$ contains a path of neighboring vertices crossing $R$ from left to right, and $\partial Y$ a path from top to bottom. By definition, any point in $\partial X\cap \partial Y$ satisfies the property above. 

\paragraph{Claim.} {\em The distance between $x(R)$ and the boundary of $R$ is tending to infinity as $\min\{n,k\}$ tends to infinity. }
\bigbreak
Before proving the claim, let us show how to finish the proof. Let $A_R$ be the event that there is a unique connected component in $\omega\cap R$ going from distance 2 of $S_{x}$ to the boundary of $R$. 
\medbreak
Assume that $\mu[S_x\stackrel{R}\longleftrightarrow \mathsf{Bottom}]\ge \mu[S_x\stackrel{R}\longleftrightarrow \mathsf{Left}].$
 The FKG inequality together with \eqref{eq:ggg} and \eqref{eq:g11} imply that 
\begin{equation}\label{eq:pp}\mu[S_x\stackrel{R}{\not\longleftrightarrow} \mathsf{Bottom}]\le \mu[S\not\longleftrightarrow \infty]^{1/4}.\end{equation} 
Now, set $R'=R+(1,0)$ and $\mathsf{Top}'$ for the top side of $R'$. We find
\begin{align*}
 \mu[S_{x'+(0,1)}\stackrel{R'}\longleftrightarrow \mathsf{Top}']&\stackrel{\phantom{\eqref{eq:ggg}}}=\mu[S_{x'}\stackrel{R}\longleftrightarrow \mathsf{Top}]\\
&\stackrel{\eqref{eq:gggg}}\ge \mu[S_{x'}\stackrel{R}\longleftrightarrow \mathsf{Bottom}]\\
 &\stackrel{\phantom{\eqref{eq:ggg}}}\ge \mu[\{S_{x'}\longleftrightarrow\infty\}\cap\{S_x\stackrel{R}\longleftrightarrow \mathsf{Bottom}\}\cap A_R]\\
 &\stackrel{\eqref{eq:pp}}\ge \mu[S\longleftrightarrow \infty]-\mu[S\not\longleftrightarrow \infty]^{1/4}-\mu[A_R^c].\end{align*}
 We deduce that
 \begin{align*}
 v(n,k+1)&\stackrel{\phantom{\eqref{eq:pp}}}\ge \mu[\{S_x\stackrel{R}\longleftrightarrow \mathsf{Bottom}\}\cap\{S_{x'+(0,1)}\stackrel{R'}\longleftrightarrow \mathsf{Top}'\}\cap A_R]\\
& \stackrel{\eqref{eq:pp}}\ge \mu[S\longleftrightarrow \infty]-2\mu[S\not\longleftrightarrow \infty]^{1/4}-2\mu[A_R^c].
 \end{align*}
 Assume now that $\mu[S_x\stackrel{R}\longleftrightarrow \mathsf{Bottom}]< \mu[S_x\stackrel{R}\longleftrightarrow \mathsf{Left}],$ the same reasoning as above, but using $x''$ instead of $x'$ and \eqref{eq:gg} instead of \eqref{eq:gggg}, leads to the same bound as above for $h(n,k)$.
 The uniqueness of the infinite connected component together with the claim imply that $\mu[A_R]$ tends to 1 as $\min\{n,k\}$ tends to infinity. Letting the size $s$ of $S$ tend to infinity finishes the proof of the first item. To conclude the whole proof, we need to prove the claim.

\begin{figure}[htp]
\centering
\hfill
\begin{minipage}[t]{.44\linewidth}
\centering
  \includegraphics[width=.9\linewidth]{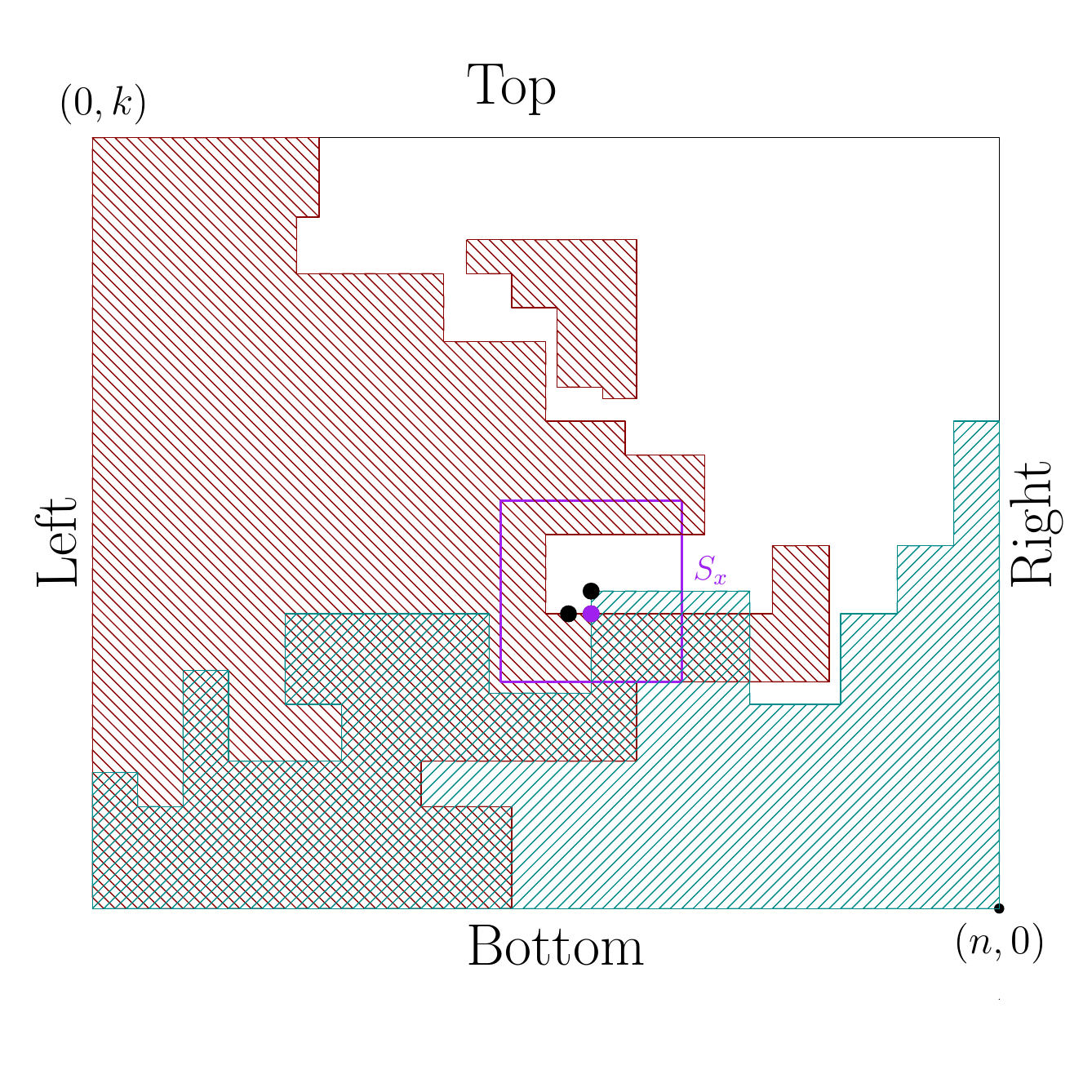}
  \caption{The vertices hatched in green are those  such that $\mu[S_x\stackrel{R}\longleftrightarrow \mathsf{Bottom}]\ge \mu[S_x\stackrel{R}\longleftrightarrow \mathsf{Top}]$, and the vertices hatched in red are those which satisfy  $\mu[S_x\stackrel{R}\longleftrightarrow \mathsf{Left}]\ge \mu[S_x\stackrel{R}\longleftrightarrow \mathsf{Right}]$. The point is selected in the intersection of the boundary of the two regions.}
  \label{fig:xdef}
\end{minipage}
\hfill
\hfill
\begin{minipage}[t]{.44\linewidth}
\centering
  \includegraphics[width=0.9\linewidth]{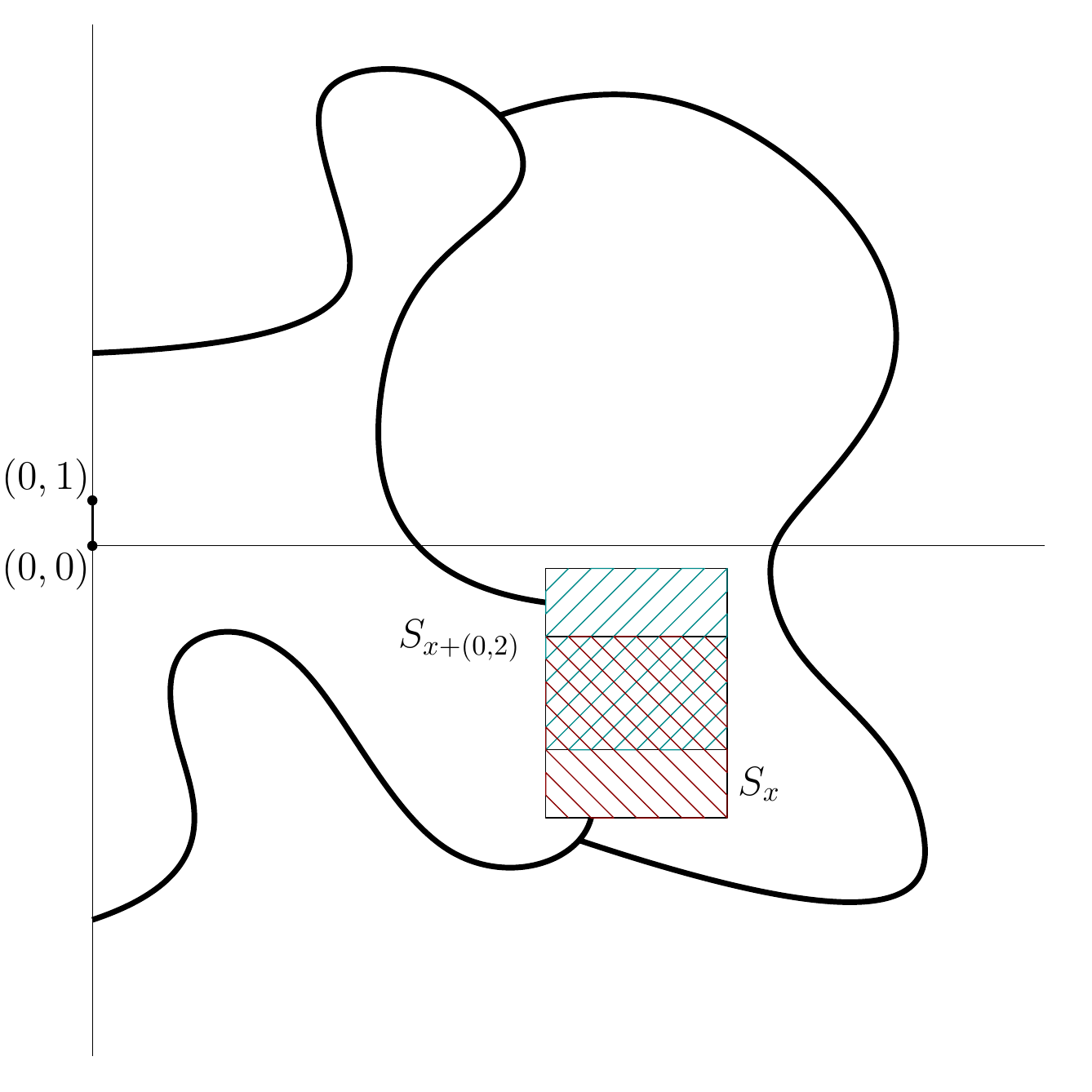}
  \caption{The construction of Claim 1. The black path prevents segment $\{0\}\times[0,1]$ from belonging to an infinite cluster of the dual.}
  \label{fig:half-plane}
\end{minipage}
\hfill
\end{figure}

\paragraph{Proof of the claim}
We prove that the distance to $\mathsf{Left}$ is tending to infinity (the other sides work the same). Note that it is sufficient to prove that $\omega\cap \bbH$, where $\bbH=\bbR_+\times\bbR$, does not contain any infinite connected component almost surely. To avoid introducing new notation, we prove the equivalent statement for $\omega^*\cap\bbH$ instead of $\omega\cap\bbH$, but the proof is the same. Introduce $\ell_+:=\{0\}\times\bbR_+$, $\ell_-:=\{0\}\times\bbR_-$ and $\ell=\ell_-\cup\ell_+$.

For an integer $s, m\ge0$, choose $x=x(m)$ with first coordinate equal to $m$ satisfying
\begin{align*}&\mu[S_x\stackrel{\bbH}\longleftrightarrow \ell_-]\ge\mu[S_x\stackrel{\bbH}\longleftrightarrow \ell_+]\quad\text{ and }\quad\mu[S_{x+(0,1)}\stackrel{\bbH}\longleftrightarrow \ell_-]\le\mu[S_{x+(0,1)}\stackrel{\bbH}\longleftrightarrow \ell_+].%\label{eq:lll}
\end{align*}
(This point exists since $\mu[S_x\stackrel{\bbH}\longleftrightarrow \ell_\pm]$ increases to $\mu[S_x\stackrel{\bbH}\longleftrightarrow\ell]$ as the second coordinate of $x$ tends to $\pm\infty$.) The FKG inequality together with these two inequalities implies that 
$$\mu[S_x\stackrel{\bbH}{\longleftrightarrow} \ell_-]\ge1-\sqrt{\mu[S_x{\not\longleftrightarrow}\, \ell]}\quad\text{ and }\quad\mu[S_{x+(0,1)}\stackrel{\bbH}{\longleftrightarrow} \ell_+]\ge1-\sqrt{\mu[S_{x+(0,1)}{\not\longleftrightarrow} \,\ell]}.$$
Let $A_m$ be the event that there is a unique connected component in $\omega\cap\bbH$ going from distance 2 of $S_x$ to $\ell$. Let $B$ be the event that $\omega\cap\bbH$ does not contain an infinite connected component intersecting $\{0\}\times[0,1]$. We find
\begin{align*}
\mu[B]&\stackrel{\phantom{\eqref{eq:pp}}}\ge \mu[\{S_x\stackrel{\bbH}{\longleftrightarrow} \ell_-\}\cap\{S_{x+(0,2)}\stackrel{\bbH}{\longleftrightarrow} \ell_+ + (0,1)\}\cap A_m]\\
&\stackrel{\phantom{\eqref{eq:pp}}}\ge 1-\sqrt{\mu[S_x{\not\longleftrightarrow} \ell]}-\sqrt{\mu[S_{x+(0,1)}{\not\longleftrightarrow} \ell]}-\mu[A_m^c].
\end{align*}
(The construction leading to the bound above is illustrated on Fig.~\ref{fig:half-plane}.)
The uniqueness of the infinite connected component in $\omega^*$ implies that $\mu[A_m]$ tends to 1 as $m$ tends to infinity, and also that for any $y\in \bbH$,
\begin{equation}\label{eq:ppp}\mu[S_y\longleftrightarrow \ell]\ge\mu[\{S_y\longleftrightarrow \infty\}\cap\{S_{-y}\longleftrightarrow\infty\}]\stackrel{\rm FKG}\ge \mu[S\longleftrightarrow\infty]^2.\end{equation}
Letting $m$ tend to infinity and then the size $s$ of $S$ tend to infinity implies that $\mu[B]=1$. This concludes the proof.
\end{proof}  

\newcommand{\etalchar}[1]{$^{#1}$}

\end{document}